\newtheorem{thm}{Theorem}[section]
\newtheorem{cor}[thm]{Corollary}
\newtheorem{lem}[thm]{Lemma}
\theoremstyle{mydefinition}
\newtheorem{dfn}[thm]{Definition}
\theoremstyle{myremark}
\newtheorem{rem}[thm]{Remark}
\newtheorem{exa}[thm]{Example}
\renewcommand{\P}{{\mathbb{P}}}
\title{The Frobenius Formula for $A=(a,ha+d,ha+b_2d,...,ha+b_kd)$}
\author{Feihu Liu$^{1}$, Guoce Xin$^{2, *}$, Suting Ye$^{3}$ and Jingjing Yin$^{4}$}
\address{$^{1, 2, 3, 4}$School of Mathematical Sciences,  Capital Normal University,
 Beijing 100048,  PR China}
\email{$^1$\texttt{liufeihu7476@163.com}\ \& $^2$\texttt{guoce\_xin@163.com}\ \& $^3$\texttt{yesuting0203@163.com}\ \newline \newline \& $^4$\texttt{yinjingj@163.com}}
\date{April 26,  2023}
\thanks{$*$ This work was partially supported by NSFC(12071311).}
\begin{document}
\maketitle

\begin{abstract}
Given relative prime positive integers $A=(a_1, a_2, ..., a_n)$, the Frobenius number $g(A)$ is the largest integer not representable as a linear combination of the $a_i$'s with nonnegative integer coefficients. We find the ``Stable" property introduced for the square sequence $A=(a,a+1,a+2^2,\dots, a+k^2)$ naturally extends
for $A(a)=(a,ha+dB)=(a,ha+d,ha+b_2d,...,ha+b_kd)$. This gives a parallel characterization of $g(A(a))$ as a ``congruence class function" modulo $b_k$ when $a$ is large enough. For orderly sequence $B=(1,b_2,\dots,b_k)$, we find good bound for $a$. In particular we calculate $g(a,ha+dB)$ for $B=(1,2,b,b+1)$, $B=(1,2,b,b+1,2b)$, $B=(1,b,2b-1)$ and $B=(1,2,...,k,K)$. Our idea also applies to the case $B=(b_1,b_2,...,b_k)$, $b_1> 1$.
\end{abstract}

\def\D{{\mathcal{D}}}

\noindent
\begin{small}
 \emph{Mathematic subject classification}: Primary 11D07; Secondary 05A15, 11B75, 11D04.
\end{small}

\noindent
\begin{small}
\emph{Keywords}: Numerical semigroup; Ap\'ery set; Frobenius number; Orderly sequence.
\end{small}

\section{Introduction}

Given relative prime positive integers $A=(a_1, a_2, ..., a_n)$, the Frobenius number $g(A)$ is the largest integer not representable as a linear combination of the $a_i$'s with nonnegative integer coefficients. The calculation of $g(A)$ has been widely studied. See \cite{Ramrez Alfonsn} for a detailed overview. For $n=2$, Sylvester \cite{J. J. Sylvester1} obtained $g(A)=a_1a_2-a_1-a_2$ in 1882. In the general case $n\geq 3$, it is known that $g(A)$ can not be given by closed formulas of a certain type \cite{F.Curtis}. For $n=3$, both G. Denham \cite{G.Denham} and A. Tripathi \cite{A. Tripathi1} have studied formula $g(A)$. Moreover, many formulas for special cases have been determined (see \cite{M. Hujter,A. Brauer,Roberts1,E. S. Selmer,A. Tripathi2,A. Tripathi3,A. L. Dulmage,T.Komatsu2022Arx,T.Komatsu22Arx,AMRobles}).

This work is along the line of the combinatorial approach for Frobenius numbers developed in \cite{Liu-Xin}.
By using this method and the Four-Square Theorem in number theory, we were able to solve an open problem proposed in \cite{D. Einstein} by D. Einstein, D. Lichtblau, A. Strzebonski and S. Wagon. See \cite{Fliuxin22}. The open problem is to characterize $g(A)$ for the square sequence $A=(a, a+B)=(a, a+1^2, a+2^2, ..., a+k^2)$.
In order to solve this open problem, we introduced a generating function to establish certain bounds, which allows us to analyze to obtain a full characterization of
$g(A)$ as a congruence class function modulo $k^2$ when $a$ is sufficiently large.

We observe that the argument in \cite{Fliuxin22} naturally extends for general $B=(1,b_2,\dots, b_k)$. Similarly we can characterize $g(A)$ where $A=(a,ha+dB)=(a, ha+db_1, ha+db_2,...,ha+db_k)$, $\gcd(a,d)=1, b_1=1$. The resulting formula is a congruence class function modulo $b_k$ when $a$ is sufficiently large.
The bound for $a$ might be large for general sequence $B$, but is good for a special class of $B$, called \emph{orderly sequence} as we shall introduce next.

We need to introduce some basic definitions and results in \cite{J.C.Rosales,Liu-Xin}. Throughout this paper, $\mathbb{Z}$, $\mathbb{N}$, and $\mathbb{P}$ denote the set of all integers, non-negative integers, and positive integers, respectively.

A subset $S$ is a \emph{submonoid} of $\mathbb{N}$ if $S\subseteq \mathbb{N}$, $0\in S$ and $S$ is closed under the sum in $\mathbb{N}$. If $\mathbb{N}\setminus S$ is finite, then we say that $S$ is a \emph{numerical semigroup} (see \cite{A.Assi,J.C.Rosales}). For the sequence $A=(a, b_1, b_2,..., b_k)$ with $\gcd(A)=1$, $a<b_1<b_2\cdots <b_k$, $a,b_i\in \mathbb{P}$, the set $\mathcal{R}=\left\{ ax+\sum_{i=1}^kb_ix_i\ \mid x, x_i\in \mathbb{N}\right\}$ is a numerical semigroup \cite{J.C.Rosales}. We denote by
$$Ape(A,a)=\{N_0,N_1,N_2,...,N_{a-1}\},$$
the \emph{Ap\'ery set} of $A$, where $N_r:=\min\{ a_0\mid a_0\equiv r\mod a, \ a_0\in \mathcal{R}\}$, $0\leq r\leq a-1$.

The following fundamental result of A. Brauer and J. E. Shockley are widely used.
\begin{thm}[\cite{J. E. Shockley}]
Suppose $A:=(a, B)=(a, b_1, b_2, ..., b_k)$, $\gcd(A)=1$, $d\in \mathbb{P}$, $\gcd(a,d)=1$. Then the Frobenius number is
$$g(A)=\max Ape(A,a)-a=\mathop{\max}\limits_{r\in \{ 0, 1, ..., a-1\}} \{ N_r\} -a =\mathop{\max}\limits_{r\in \{ 0, 1, ..., a-1\}} \{ N_{dr}\} -a.$$
\end{thm}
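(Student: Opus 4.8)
The plan is to prove the three displayed equalities from left to right, the core being the standard description of $\mathcal{R}$ in terms of its Apéry set. First I would check that each $N_r$ is well defined: since $\gcd(A)=1$, the submonoid $\mathcal{R}$ is a numerical semigroup, so $\mathbb{N}\setminus\mathcal{R}$ is finite; hence every residue class modulo $a$ contains arbitrarily large elements of $\mathcal{R}$, and the minimum in the definition of $N_r$ exists. Observe $N_0=0$, and (assuming $\mathcal{R}\neq\mathbb{N}$, e.g.\ $a\ge 2$) the set of non-representable non-negative integers is finite and non-empty, so $g(A)$ itself is well defined.

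The key lemma I would establish is the following: for $m\in\mathbb{N}$ with $r\equiv m\pmod a$ and $0\le r\le a-1$, one has $m\in\mathcal{R}$ if and only if $m\ge N_r$. The ``only if'' direction is immediate from the minimality in the definition of $N_r$. For the ``if'' direction, write $m=N_r+ta$ with $t\in\mathbb{N}$; since $N_r\in\mathcal{R}$, $a\in\mathcal{R}$, and $\mathcal{R}$ is closed under addition, $m\in\mathcal{R}$. Consequently, within the residue class $r$ the non-representable non-negative integers are exactly $r,r+a,\dots,N_r-a$ (an empty list when $r=0$), whose largest element, when the list is non-empty, is $N_r-a$. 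Taking the maximum over $r\in\{0,1,\dots,a-1\}$, the largest non-representable integer equals $\max_{r}N_r-a=\max Ape(A,a)-a$, which is $g(A)$ by definition of the Frobenius number; the middle equality is just unpacking the notation $\max Ape(A,a)=\max_{r}\{N_r\}$.

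For the last equality I would use $\gcd(a,d)=1$: the map $r\mapsto dr\bmod a$ is a bijection of $\{0,1,\dots,a-1\}$ onto itself, so $\{N_{dr}:0\le r\le a-1\}$ and $\{N_r:0\le r\le a-1\}$ coincide as multisets and their maxima agree. Since this is the classical theorem of Brauer and Shockley, there is no genuine obstacle here; the only points deserving care are the well-definedness of $Ape(A,a)$, which rests on the cofiniteness of $\mathcal{R}$, and the harmless bookkeeping of the residue class $r=0$, where $N_0=0$ contributes $-a$ to the maximum and never attains it.
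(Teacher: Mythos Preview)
Your argument is correct and is exactly the standard Brauer--Shockley proof: the key lemma ``$m\in\mathcal{R}$ iff $m\ge N_{m\bmod a}$'' immediately yields $g(A)=\max_r N_r-a$, and the last equality is the bijectivity of $r\mapsto dr\bmod a$ when $\gcd(a,d)=1$. Note, however, that the paper does not supply its own proof of this theorem at all---it is quoted as a known result with a citation to Brauer and Shockley---so there is nothing in the paper to compare against; your write-up simply fills in the omitted classical argument.
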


Suppose $A=(a, ha+dB)=(a, ha+db_1, ..., ha+db_k)$. The computation of
$N_{dr}$ is reduced to a minimization problem defined by
\begin{equation}\label{hahaha4}
O_B(M):=\min\left\{\sum_{i=1}^kx_i \mid \sum_{i=1}^kb_ix_i=M, \ \ M,x_i\in \mathbb{N}, 1\leq i\leq k\right\}.
\end{equation}

\begin{lem}[\cite{Liu-Xin}]\label{0202}
Let $A=(a, ha+db_1, ..., ha+db_k)$,  $k, h, d, b_i \in\mathbb{P}$ and $\gcd(A)=1$,  $\gcd(a, d)=1$. For a given $ 0\leq r\leq a-1$,  we have
\begin{equation}\label{0203}
N_{dr}=\min \{N_{dr}(m) \mid m\in\mathbb{N}\}, \qquad \text{where}\ \ \ N_{dr}(m):=O_B(ma+r) \cdot ha+(ma+r)d.
\end{equation}
\end{lem}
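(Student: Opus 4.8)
The plan is to unwind the definition of $N_{dr}$ directly in terms of representations inside the numerical semigroup $\mathcal{R}$ generated by the entries of $A=(a,ha+db_1,\dots,ha+db_k)$, and then to sort those representations according to the value of $\sum_{i=1}^k b_ix_i$. First I would write a typical element of $\mathcal{R}$ as
\[
 n = ax_0+\sum_{i=1}^k(ha+db_i)x_i = a\Bigl(x_0+h\sum_{i=1}^k x_i\Bigr)+d\sum_{i=1}^k b_ix_i,\qquad x_0,x_i\in\mathbb{N}.
\]
Reducing modulo $a$ and using $\gcd(a,d)=1$, one gets $n\equiv dr\pmod a$ if and only if $\sum_{i=1}^k b_ix_i\equiv r\pmod a$; since $\sum_i b_ix_i\ge 0$ and $0\le r\le a-1$, this is in turn equivalent to $\sum_{i=1}^k b_ix_i=ma+r$ for a unique $m\in\mathbb{N}$. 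Thus every element of $\mathcal{R}$ lying in the residue class $dr$ modulo $a$ is captured, exactly once, by a choice of $m\in\mathbb{N}$ together with $x_0\in\mathbb{N}$ and a nonnegative solution of $\sum_i b_ix_i=ma+r$.

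Next, for a fixed $m$ I would minimize $n=ax_0+ha\sum_i x_i+d(ma+r)$ over all such representations. The summand $d(ma+r)$ is constant, and the coefficient $a$ of $x_0$ is positive, so the optimum forces $x_0=0$; minimizing $ha\sum_i x_i$ subject to $\sum_i b_ix_i=ma+r$ is by definition $ha\cdot O_B(ma+r)$ (with the convention that $O_B(M)=+\infty$ when $M$ is not $B$-representable, in which case that $m$ contributes nothing). Hence the least element of $\mathcal{R}$ in the class $dr$ among those with $\sum_i b_ix_i=ma+r$ is exactly $N_{dr}(m)=O_B(ma+r)\cdot ha+(ma+r)d$. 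Taking the minimum over $m\in\mathbb{N}$ then gives $N_{dr}=\min\{N_{dr}(m)\mid m\in\mathbb{N}\}$; the minimum is attained and finite because $\mathcal{R}$ is a numerical semigroup and therefore meets every residue class modulo $a$, so at least one $N_{dr}(m)$ is finite.

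The argument is essentially bookkeeping, so I do not expect a serious obstacle; the points that need care are (i) the passage from $d\sum_i b_ix_i\equiv dr$ to $\sum_i b_ix_i\equiv r$ modulo $a$, which is exactly where the hypothesis $\gcd(a,d)=1$ enters, and (ii) verifying that $m$ is forced to be a \emph{nonnegative} integer, so that the minimization genuinely ranges over $\mathbb{N}$. It is also worth stating explicitly the convention for $O_B$ on non-$B$-representable arguments, so that the formula for $N_{dr}(m)$ is meaningful for every $m\in\mathbb{N}$ and the identity $N_{dr}=\min_m N_{dr}(m)$ needs no case distinction.
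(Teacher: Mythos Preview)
Your argument is correct: writing a general element of $\mathcal{R}$ as $a(x_0+h\sum_i x_i)+d\sum_i b_ix_i$, reducing modulo $a$ using $\gcd(a,d)=1$, and then minimizing first in $x_0$ and then in $\sum_i x_i$ for each fixed value $\sum_i b_ix_i=ma+r$ is exactly the right bookkeeping, and your remarks about $m\in\mathbb{N}$ and the convention $O_B(M)=+\infty$ for non-representable $M$ are the appropriate caveats.

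There is nothing in the present paper to compare against: Lemma~\ref{0202} is quoted from \cite{Liu-Xin} and no proof is given here. Your direct unwinding of the definitions is the standard proof one would expect for this result.
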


Now, let $B=(b_1, b_2, ..., b_k)$, $1=b_1<b_2<\cdots b_k$. For given $B$ and $M\in \mathbb{N}$, determining $O_B(M)$ is called \emph{the change-making problem} \cite{AnnAdamaszek}. We always assume $b_1=1$ to avoid the absence of the solution. The greedy strategy is to use as many of the maximum as possible, than as many of the next one as possible, and so on. We denote $G_B(M)$ the number of elements used in $B$ by the greedy strategy. So we have $O_B(M)\leq G_B(M)$. If the greedy solution is always optimal, i.e., $O_B(M)=G_B(M)$ for all $M$, then we call the sequence $B$ \emph{orderly}; Otherwise, we call sequence $B$ \emph{non-orderly}. We know that $B=(1)$, $B=(1,b_2)$ are orderly sequences. The sequence $B=(1,6,13)$ is non-orderly. Because $18=13+1+1+1+1+1=6+6+6$, we have $3=O_B(18)<G_B(18)=6$.  A representation of $M$ by exactly $O_B(M)$ elements is said to be \emph{optimal}.

In this paper, we characterize the formula $g(A)$ for $A=(a,ha+dB)=(a, ha+d, ha+db_2,...,ha+db_k)$, $\gcd(a,d)=1$. We assume $k\geq 2$ since the $k=1$ case is known.
We find that the ``Stable" property for $O_B(M)$ not only holds for the square sequence, but also holds for general $B$. This leads to a characterization of the Frobenius formula $g(A)$ as a ``congruence class function" modulo $b_k$, i.e., $g(A(a))$ is divided into $b_k$ classes according to $a \mod b_k$. Moreover, each segment is a quadratic polynomial in $a$ with leading coefficient $\frac{h}{b_k}$. This result holds when $a$ is greater than a certain bound. If $B$ is an orderly sequence, we obtain a good bound.

The paper is organized as follows.
In Section 2, we mainly discuss the ``Stable" property of $O_B(M)$ for general $B$.
In Section 3, we characterize the Frobenius formula $g(A(a))=g(a,ha+dB)$, where $B=(1,b_2,...,b_k)$. As special cases, we obtain the Frobenius formula $g(a,ha+dB)$ for $B=(1,2,b,b+1)$.
Section 4 focus on orderly sequences  $B$. We obtain good bounds for the Frobenius formula $g(A(a))$. Furthermore, calculate $g(a,ha+dB)$ for $B=(1,2,b,b+1,2b)$, $B=(1,b,2b-1)$ and $B=(1,2,...,k,K)$.
In Section 5, we discuss the case $b_1> 1$.

\section{The ``Stable" Property}\label{stablepropcc}

Let $A=(a, ha+dB)=(a,ha+db_1,ha+db_2,...,ha+db_k)$ and $b_1=1$.
In this section we mainly focus on characterizing the ``Stable" property about $O_B(M)$. Our starting point is Lemma \ref{0202}, which says that  we need to compute the quantity
$$N_{dr}=\min\left\{O_B(ma+r) \cdot ha+(ma+r)d \mid \sum_{i=1}^kx_ib_i=ma+r, \ \ m,x_i\in \mathbb{N}, 1\leq i\leq k\right\}$$
for each $r$.

In summary, we need to study how to calculate $O_B(M)$ efficiently.
By Equation \eqref{hahaha4}, it is natural to consider the generating function:
$$ F(t, q) =\prod_{i=1}^k \frac{1}{1-t q^{b_i}} = \sum_{n\ge 0}  \left( \sum_{ x_1+b_2x_2+\cdots+b_kx_k=n} t^{x_1+x_2+\cdots +x_k} \right)   q^n .$$

If we call a solution $(x_1,x_2,...,x_k)$ for \eqref{hahaha4} satisfying $x_1+\cdots +x_k=O_B(M)$ \emph{optimal}, then
the generating function $f(t,q) =\sum_{n\ge 0} t^{O_B(n)} q^n$ extracts only one optimal representation, weighted by $t^{O_B(n)}$,
 for each $n$.
It is easy to see that
$$f(t,q) =\sum_{n\ge 0} t^{O_B(n)} q^n := \circledast F(t,q),$$
where $\circledast$ is the operator defined as follows.

\begin{dfn}\label{hahaha12}
For a power series $G(t,q)$ in $t,q$ with nonnegative coefficients, define
$\circledast G(t,q)$ be the power series obtained from $G(t,q)$ by picking a minimum degree (in $t$) term in each coefficient (in $q$).
\end{dfn}

The basic fact $\circledast F(t,q) G(t,q) = \circledast (\circledast F(t,q)) (\circledast G(t,q))$ allows us to use Maple to compute the first $M+1$ terms of $f(t,q)$ quickly for any given reasonably large $M$:
\begin{enumerate}
  \item Start with $f_1:=\sum_{n= 0}^M t^n q^n$.

  \item Suppose $f_{i-1}$ has been obtained. Then $f_i$ is obtained by: first compute $\circledast f_{i-1} \cdot \sum_{n=0}^{h_i} t^n q^{b_i n}$
  where $h_i= \lfloor M/b_i \rfloor$ will be optimized,
  and then remove all terms with degree in $q$ larger than $M$.

  \item Set $f(t,q)|_{q^{\leq M}}=f_k$.
\end{enumerate}
Thus we have proved the following result.
\begin{lem}\label{hahaha10}
Let $B=(1,b_2,...,b_k)$, $b_i\in\mathbb{N}$, $2\leq i\leq k$. For a given $M$, the first $M+1$ term $f(t,q)|_{q^{\leq M}}$ of $f(t,q)$ can be computed in polynomial time in $M$. Consequently, $O_B(r), \ r\leq M$ can be computed in polynomial time in $M$.
\end{lem}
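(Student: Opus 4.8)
The plan is to make the informal algorithm preceding Lemma~\ref{hahaha10} precise and then count operations. First I would set up the data structure: a polynomial in $\ZZ[t]$ truncated at $q$-degree $M$ is stored as a vector $(c_0,c_1,\dots,c_M)$, where $c_n$ records the (unique) minimum $t$-degree appearing in the coefficient of $q^n$, i.e. after applying $\circledast$ each coefficient is a monomial $t^{c_n}$. So $\circledast f_{i-1}$ is represented by an array of $M+1$ nonnegative integers, each of size $O(\log M)$ (since $O_B(n)\le n$, as $b_1=1$), hence the whole object has size $O(M\log M)$ bits.

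Next I would justify correctness, namely that the recursion computes $f(t,q)\bmod q^{M+1}$. This is exactly the identity $\circledast F(t,q)G(t,q)=\circledast\bigl((\circledast F(t,q))(\circledast G(t,q))\bigr)$ quoted in the text, applied inductively to the factorization $F(t,q)=\prod_{i=1}^k\frac{1}{1-tq^{b_i}}$, together with the observation that truncation at $q$-degree $M$ commutes with $\circledast$ and with multiplication-then-truncation. Concretely: after step $i$, $f_i$ equals $\bigl(\circledast\prod_{j=1}^i\frac1{1-tq^{b_j}}\bigr)\bmod q^{M+1}$, so $f_k=f(t,q)\bmod q^{M+1}$, which gives $O_B(n)$ for all $n\le M$ by reading off the $t$-exponents.

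Then I would bound the running time of one step. Multiplying $\circledast f_{i-1}$ (an array of monomials indexed by $q$-degree) by $\sum_{n=0}^{h_i}t^nq^{b_in}$ and then applying $\circledast$ to the product, truncated at $q$-degree $M$, can be done in a single pass: for each target $q$-degree $m\le M$, the new exponent is $\min_{0\le n\le \lfloor m/b_i\rfloor,\ n\le h_i}\bigl(n+c_{m-b_in}\bigr)$. Taking $h_i=\lfloor M/b_i\rfloor$ the inner minimum ranges over at most $\lfloor M/b_i\rfloor+1$ values, so step $i$ costs $O\!\bigl(M\cdot(M/b_i)\bigr)$ integer operations on $O(\log M)$-bit numbers, hence $O(M^2)$ in the worst case; over $i=2,\dots,k$ the total is $O(kM^2)$ word operations, which is polynomial in $M$ (with $k\le M$ since $b_k\ge k$). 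Finally, once $f(t,q)\bmod q^{M+1}$ is in hand, each $O_B(r)$ for $r\le M$ is just the exponent $c_r$, read off in constant time.

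The only slightly delicate point — and the one I would state carefully rather than wave at — is that $\circledast$ genuinely distributes over the product as claimed, so that keeping only the minimal-$t$-degree monomial at each stage never discards a term needed later; this follows because in $F\cdot G$ a term of minimal $t$-degree at $q^n$ always arises from a minimal-$t$-degree term of $F$ at some $q^j$ times a minimal-$t$-degree term of $G$ at $q^{n-j}$ (any non-minimal choice on either side can only increase the total $t$-degree). Granting that lemma, everything else is bookkeeping, and the polynomial-time claim — indeed $O(kM^2)$ — follows immediately.
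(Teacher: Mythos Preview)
Your proposal is correct and follows exactly the paper's approach: the paper's ``proof'' is the three-step algorithm stated immediately before the lemma, relying on the identity $\circledast FG=\circledast((\circledast F)(\circledast G))$, and declares ``Thus we have proved the following result.'' You have simply fleshed this out with an explicit data structure, a justification of the $\circledast$-distributivity that the paper asserts as a ``basic fact,'' and a concrete $O(kM^2)$ running-time bound that the paper omits.
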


We first give an example to facilitate our understanding of the subsequent results. The following instance says that $O_B(r)$ is stable in some sense.
\begin{exa}\label{b1613exm}
Let $B=(1,11,14)$. For a given reasonably large $M=140$, we get the following results by Maple. To see the stable property clearly, we divide by $f(t,q)=\sum_{i=0}^{13} f^i$, where $f^i$ extract all terms corresponding to  $q^{14s+i}$. We also bold faced all terms not implied by the stable property.
\begin{small}
\begin{align*}
f^0&=\mathbf{1}+tq^{14}+{t}^{2}{q}^{28}+{t}^{3}{q}^{42}+{t}^{4}{q}^{56}+{t}^{5}{q}^{70}
+t^{6}q^{84}+t^7q^{98}+t^{8}q^{112}+t^9q^{126}+\cdots
\\f^1&=\mathbf{tq}+t^2q^{15}+{t}^{3}{q}^{29}+{t}^{4}{q}^{43}+{t}^{5}{q}^{57}
+{t}^{6}{q}^{71}+t^{7}q^{85}+t^8q^{99}+t^9q^{113}+t^{10}q^{127}+\cdots
\\f^2&=\mathbf{t^2q^2+t^3q^{16}+{t}^{4}{q}^{30}+t^4q^{44}}+{t}^{5}{q}^{58}
+{t}^{6}{q}^{72}+{t}^{7}{q}^{86}+t^{8}q^{100}+t^9q^{114}+t^{10}q^{128}+\cdots
\\f^3&=\mathbf{t^3q^3+t^4q^{17}+{t}^{5}{q}^{31}+t^5q^{45}}+{t}^{6}{q}^{59}
+{t}^{7}{q}^{73}+{t}^{8}{q}^{87}+t^9q^{101}+t^{10}q^{115}+t^{11}q^{129}+\cdots
\\f^4&=\mathbf{t^4q^4+t^5q^{18}+{t}^{6}{q}^{32}+{t}^{6}{q}^{46}+{t}^{7}{q}^{60}
+{t}^{8}{q}^{74}+t^8q^{88}}+t^{9}q^{102}+t^{10}q^{116}+t^{11}q^{130}+\cdots
\\f^5&=\mathbf{t^5q^5+t^6q^{19}+{t}^{3}{q}^{33}}+{t}^{4}{q}^{47}+{t}^{5}{q}^{61}
+t^6q^{75}+{t}^{7}{q}^{89}+t^8q^{103}+t^9q^{117}+t^{10}q^{131}+\cdots
\\f^6&=\mathbf{t^6q^6+t^7q^{20}+{t}^{4}{q}^{34}}+{t}^{5}{q}^{48}+{t}^{6}{q}^{62}
+{t}^{7}{q}^{76}+t^{8}q^{90}+t^{9}q^{104}+t^{10}q^{118}+t^{11}q^{132}+\cdots
\\f^7&=\mathbf{t^7q^7+t^8q^{21}+{t}^{5}{q}^{35}+{t}^{6}{q}^{49}+{t}^{7}{q}^{63}
+{t}^{7}{q}^{77}}+t^8q^{91}+t^9q^{105}+t^{10}q^{119}+t^{11}q^{133}+\cdots
\\f^8&=\mathbf{t^8q^8+t^2q^{22}}+{t}^{3}{q}^{36}+{t}^{4}{q}^{50}+{t}^{5}{q}^{64}
+{t}^{6}{q}^{78}+t^7q^{92}+t^8q^{106}+t^9q^{120}+t^{10}q^{134}+\cdots
\\f^9&=\mathbf{t^9q^9+t^3q^{23}}+{t}^{4}{q}^{37}+{t}^{5}{q}^{51}+{t}^{6}{q}^{65}
+{t}^{7}{q}^{79}+t^8q^{93}+t^9q^{107}+t^{10}q^{121}+t^{11}q^{135}+\cdots
\\f^{10}&=\mathbf{t^{10}q^{10}+t^4q^{24}+{t}^{5}{q}^{38}+{t}^{6}{q}^{52}+{t}^{6}{q}^{66}}
+{t}^{7}{q}^{80}+t^8q^{94}+t^9q^{108}+t^{10}q^{122}+t^{11}q^{136}+\cdots
\\f^{11}&=\mathbf{tq^{11}}+t^2q^{25}+{t}^{3}{q}^{39}+{t}^{4}{q}^{53}+{t}^{5}{q}^{67}
+{t}^{6}{q}^{81}+t^7q^{95}+t^8q^{109}+t^9q^{123}+t^{10}q^{137}+\cdots
\\f^{12}&=\mathbf{t^2q^{12}}+t^3q^{26}+{t}^{4}{q}^{40}+{t}^{5}{q}^{54}+{t}^{6}{q}^{68}
+{t}^{7}{q}^{82}+t^8q^{96}+t^9q^{110}+t^{10}q^{124}+t^{11}q^{138}+\cdots
\\f^{13}&=\mathbf{t^3q^{13}+t^4q^{27}+{t}^{5}{q}^{41}+{t}^{5}{q}^{55}}+{t}^{6}{q}^{69}
+{t}^{7}{q}^{83}+t^8q^{97}+t^9q^{111}+t^{10}q^{125}+t^{11}q^{139}+\cdots.
\end{align*}
\end{small}
Any $O_B(r)$ can be deduced from the bold faced terms. For instance, $O_B(130)=O_B(88+3\cdot 14)=O_B(88)+3=11$.
\end{exa}

Let $c$ be the maximum value of the degree of $t$ in the first $b_k$ term of $f(t,q)$, i.e., $c=\max\{O_B(r) \mid 0\leq r\leq b_k-1\}$. Clearly $c\geq 1$.
\begin{lem}\label{hahaha1}
Let $B=(1,b_2,...,b_k)$, $b_i\in\mathbb{N}$, $2\leq i\leq k$. If $r\in \mathbb{N}$, then
$$\left\lceil \frac{r}{b_k}\right\rceil \leq O_B(r)\leq \left\lfloor \frac{r}{b_k}\right\rfloor+c.$$
\end{lem}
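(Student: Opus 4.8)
The plan is to establish the two inequalities separately, both by elementary but slightly different arguments built directly on the definition \eqref{hahaha4} of $O_B$ and on the trivial additivity $O_B(M+N)\le O_B(M)+O_B(N)$.

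For the lower bound $\lceil r/b_k\rceil \le O_B(r)$: take an optimal representation $r=\sum_{i=1}^k b_i x_i$ with $\sum x_i = O_B(r)$. Since every $b_i\le b_k$, I get $r = \sum b_i x_i \le b_k \sum x_i = b_k\, O_B(r)$, hence $O_B(r)\ge r/b_k$, and since $O_B(r)$ is an integer, $O_B(r)\ge \lceil r/b_k\rceil$. This is the easy half and needs no more than one line.

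For the upper bound $O_B(r)\le \lfloor r/b_k\rfloor + c$: write $r = b_k q + s$ with $q=\lfloor r/b_k\rfloor$ and $0\le s\le b_k-1$. Using subadditivity, $O_B(r)\le O_B(b_k q) + O_B(s)$. Now $O_B(b_k q)\le q$ because $b_k q$ is realized by taking $x_k=q$ and all other $x_i=0$ (this uses $b_k\in B$). And $O_B(s)\le c$ by the very definition of $c=\max\{O_B(r')\mid 0\le r'\le b_k-1\}$, since $s$ is in that range. Adding gives $O_B(r)\le q + c = \lfloor r/b_k\rfloor + c$, as desired. The remark $c\ge 1$ is immediate since $O_B(1)=1$ (forcing $c\ge 1$ whenever $b_k\ge 2$, which holds as $k\ge 2$).

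There is essentially no obstacle here; the only point requiring the tiniest bit of care is making sure $b_k q$ and $s$ are legitimate nonnegative arguments of $O_B$ and that $s$ really falls in the range $[0,b_k-1]$ defining $c$, which is guaranteed by the Euclidean division. I would present the two bounds as a two-sentence proof.
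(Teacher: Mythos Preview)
Your proof is correct and is essentially the same as the paper's: both use Euclidean division $r=qb_k+s$ (the paper writes $r=sb_k+r_1$), bound $O_B(r)$ above by $q+O_B(s)\le q+c$ via subadditivity, and bound it below using that each $b_i\le b_k$. The only cosmetic difference is that the paper separates the case $r_1=0$ (where $O_B(r)=s$ exactly) while you handle all cases uniformly.
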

\begin{proof}
Let $r=sb_k+r_1$, $s\geq 0$, $0\leq r_1\leq b_k-1$. If $r_1=0$, then
clearly $O_B(r)=s$. If $1\le r_1\le b_k-1$,
$$\left\lceil \frac{r}{b_k}\right\rceil= s+1\leq O_B(r)\leq s+ O_B(r_1)\leq s+ c=\left\lfloor \frac{r}{b_k}\right\rfloor+c.$$
\end{proof}

The following lemma says that $O_B(r)$ is ``Stable", and hence reduce the computation of $O_B(r)$ to finitely many $r$'s.

\begin{lem}\label{hahaha17}
Let $B=(1,b_2,...,b_k)$, $b_i\in\mathbb{N}$, $2\leq i\leq k$. For any $r\geq \left\lceil\frac{(c-1)b_{k-1}}{b_k-b_{k-1}}\right\rceil\cdot b_k$, $r\in \mathbb{P}$, we have $$O_B(b_k+r)=O_B(r)+1.$$
\end{lem}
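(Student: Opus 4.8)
The plan is to show that for $r$ large enough, any optimal representation of $b_k+r$ must actually use at least one copy of $b_k$; removing that copy leaves a representation of $r$ using one fewer part, so $O_B(b_k+r)\ge O_B(r)+1$, and the reverse inequality $O_B(b_k+r)\le O_B(r)+1$ is trivial (adjoin a $b_k$ to an optimal representation of $r$). So the whole content is the claim: if $r$ is sufficiently large, then every optimal representation of $M:=b_k+r$ contains the part $b_k$.

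To prove this, suppose for contradiction that some optimal representation $(x_1,\dots,x_k)$ of $M$ has $x_k=0$. Then all parts are at most $b_{k-1}$, so the number of parts satisfies $O_B(M)=\sum_i x_i\ge M/b_{k-1}=(b_k+r)/b_{k-1}$. On the other hand, Lemma \ref{hahaha1} gives the upper bound $O_B(M)\le \lfloor M/b_k\rfloor+c\le M/b_k+c=(b_k+r)/b_k+c$. Combining,
\[
\frac{b_k+r}{b_{k-1}}\le \frac{b_k+r}{b_k}+c,
\]
which, after writing $b_k+r$ as a common factor and using $\frac1{b_{k-1}}-\frac1{b_k}=\frac{b_k-b_{k-1}}{b_{k-1}b_k}$, rearranges to $(b_k+r)\cdot\frac{b_k-b_{k-1}}{b_{k-1}b_k}\le c$, i.e.
\[
b_k+r\le \frac{c\,b_{k-1}b_k}{b_k-b_{k-1}}.
\]
Hence as soon as $r$ exceeds $\frac{c\,b_{k-1}b_k}{b_k-b_{k-1}}-b_k=\frac{b_k b_{k-1}(c-1)+\text{(nonneg.)}}{b_k-b_{k-1}}$, more precisely once $r\ge \left\lceil\frac{(c-1)b_{k-1}}{b_k-b_{k-1}}\right\rceil b_k$, the inequality fails, giving the contradiction; one checks the arithmetic that this threshold indeed forces $b_k+r>\frac{c\,b_{k-1}b_k}{b_k-b_{k-1}}$ (dividing the displayed bound by $b_k$ and comparing with $1+r/b_k$).

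The main obstacle is purely bookkeeping: getting the threshold on $r$ to come out exactly as $\left\lceil\frac{(c-1)b_{k-1}}{b_k-b_{k-1}}\right\rceil b_k$ rather than something slightly weaker. The subtlety is that one must be slightly careful whether to use $\lfloor M/b_k\rfloor$ or $M/b_k$ in Lemma \ref{hahaha1} and how the ceiling interacts with the restriction $r$ being a multiple of $b_k$ (or not); since the statement quantifies over all $r\ge(\text{threshold})$ and the threshold is itself a multiple of $b_k$, it is cleanest to first reduce to showing the claim for $r$ a multiple of $b_k$ and then handle general $r$ by monotonicity of the argument in $r$. I would double-check the edge case $c=1$ (orderly-like behavior near $0$), where the threshold is $0$ and the statement should hold for all $r\ge 0$, which matches the computation since then $b_k-b_{k-1}>0$ already forces $x_k\ge1$ for any $r\ge1$, and $r=0$ is immediate.
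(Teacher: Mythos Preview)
Your overall strategy is the paper's: bound $O_B(b_k+r)$ from below by $(b_k+r)/b_{k-1}$ when an optimal representation avoids $b_k$, and contradict an upper bound. The lower bound is fine. The gap is in the upper bound you pair it with. You apply Lemma~\ref{hahaha1} directly to $M=b_k+r$, obtaining $O_B(M)\le\lfloor M/b_k\rfloor+c=m+1+c$ where $r=mb_k+j$. The paper instead applies Lemma~\ref{hahaha1} to $r$, getting $O_B(r)\le m+c$, and then uses the contradiction hypothesis $O_B(b_k+r)<O_B(r)+1$ to conclude $O_B(b_k+r)\le m+c$. That single unit is exactly what produces the stated threshold: from $(b_k+r)/b_{k-1}\le m+c$ and $b_k+r\ge(m+1)b_k$ one obtains $(m+1)b_k\le(m+c)b_{k-1}$, hence $m(b_k-b_{k-1})\le cb_{k-1}-b_k<(c-1)b_{k-1}$, contradicting $m\ge\big\lceil(c-1)b_{k-1}/(b_k-b_{k-1})\big\rceil$.

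This is not ``purely bookkeeping''; the deferred arithmetic check is false. The implication you need, namely that $r\ge\big\lceil\frac{(c-1)b_{k-1}}{b_k-b_{k-1}}\big\rceil b_k$ forces $b_k+r>\frac{cb_{k-1}b_k}{b_k-b_{k-1}}$, already fails for $B=(1,11,14)$ from Example~\ref{b1613exm}: there $c=10$, the lemma's threshold is $33\cdot 14=462$, yet $b_k+462=476$ while $\frac{cb_{k-1}b_k}{b_k-b_{k-1}}=\frac{1540}{3}\approx 513.3$; your inequality only engages once $r\ge 500$. The underlying issue is your contradiction hypothesis. You assume ``some optimal representation of $M$ has $x_k=0$'' (aiming for the stronger conclusion that \emph{every} optimal representation uses $b_k$), which gives no relation between $O_B(M)$ and $O_B(r)$. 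You should instead assume $O_B(b_k+r)<O_B(r)+1$, as the paper does: this both forces every optimal representation of $b_k+r$ to avoid $b_k$ (yielding your lower bound) \emph{and} supplies the sharper upper bound $O_B(b_k+r)\le O_B(r)\le m+c$ needed to hit the exact threshold.
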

\begin{proof}
Obviously $O_B(b_k+r)\leq O_B(r)+1$ always hold. If $b_k+r$ has a minimal representation including a $b_k$, then
removing the $b_k$ leads to the other side inequality $O_B(r)\leq O_B(b_k+r)-1$ and hence $O_B(b_k+r)=O_B(r)+1$.

Thus assume to the contrary that $O_B(b_k+r)<O_B(r)+1$. Then $b_k+r$ cannot have a minimal representation using $b_k$.
We prove that this is impossible under our assumption.

Let $r=mb_k+j$, where $0\leq j\leq b_k-1$. By definition of $c$, we have $j=\sum_{i=1}^c e_i$, $e_i\in \{0,1,b_2,...,b_{k-1}\}$ for $1\leq i\leq c$. By $r\geq \left\lceil\frac{(c-1)b_{k-1}}{b_k-b_{k-1}}\right\rceil\cdot b_k$, we have $m\geq \left\lceil\frac{(c-1)b_{k-1}}{b_k-b_{k-1}}\right\rceil$. The $m$ is chosen so that $mb_k\geq (m+c-1)b_{k-1}$. For such $m$, the following formula holds:
$$r=mb_k+j=mb_k+\sum_{i=1}^c e_i \geq (m+c-1)b_{k-1}.$$

This implies that without using $b_k$, we have $O_B(b_k+r)\geq (m+c-1)+2$. Then we have $m+c+1\leq O_B(b_k+r)< O_B(r)+1\leq m+c+1$, a contradiction.
\end{proof}

\begin{thm}\label{t-OBM}
Given a sequence $B=(1,b_2,...,b_k)$, there is a polynomial time algorithm in $b_k$ for computing $O_B(M)$ for all $M\in \P$.
\end{thm}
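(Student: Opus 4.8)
The plan is to combine the \emph{Stable} property (Lemma~\ref{hahaha17}) with the polynomial-time computation of the truncated generating function (Lemma~\ref{hahaha10}), so that only finitely many ``small'' values of $O_B$ need to be computed directly, and all large values are then obtained by a congruence-class reduction modulo $b_k$.

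First I would fix the relevant threshold. By Lemma~\ref{hahaha17}, for every $r\in\P$ with $r\ge R_0:=\left\lceil\frac{(c-1)b_{k-1}}{b_k-b_{k-1}}\right\rceil\cdot b_k$ we have $O_B(b_k+r)=O_B(r)+1$. Set $T:=R_0+b_k$. I claim that knowing the finite list $O_B(0),O_B(1),\dots,O_B(T-1)$ suffices to compute $O_B(M)$ for every $M$: if $M<T$ we read it off the list directly, and if $M\ge T$ we write $M=qb_k+r$ with $0\le r<b_k$, pick the largest integer $s\ge 0$ with $r+sb_k<T$ (equivalently $r+sb_k$ lies in the pre-threshold window but is as large as possible, so $r+sb_k\ge R_0$), and then iterate Lemma~\ref{hahaha17} to get $O_B(M)=O_B(r+sb_k)+(q-s)$. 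The value $O_B(r+sb_k)$ is again in the finite list. So the algorithm is: compute $c$ and $R_0$; compute $f(t,q)|_{q^{\le T-1}}$ via the procedure before Lemma~\ref{hahaha10}, thereby reading off all of $O_B(0),\dots,O_B(T-1)$; then answer each query $M$ by the constant-work reduction above.

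Next I would verify the running time is polynomial in $b_k$. The key numerical point is that $c=\max\{O_B(r):0\le r\le b_k-1\}\le b_k-1$, since $b_1=1$ forces $O_B(r)\le r\le b_k-1$ (indeed Lemma~\ref{hahaha1} already bounds $O_B(r)\le\lfloor r/b_k\rfloor+c$, and for $r<b_k$ this is just $O_B(r)\le c$; the crude bound $O_B(r)\le r$ is what pins $c$ down). Hence $R_0\le\left\lceil\frac{(b_k-2)b_{k-1}}{b_k-b_{k-1}}\right\rceil b_k$. Since $b_{k-1}\le b_k-1$, we have $b_k-b_{k-1}\ge 1$, so $R_0\le (b_k-2)b_{k-1}\cdot b_k < b_k^3$, and thus $T=O(b_k^3)$ is polynomial in $b_k$. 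By Lemma~\ref{hahaha10} the truncated series $f(t,q)|_{q^{\le T-1}}$ is computable in time polynomial in $T$, hence polynomial in $b_k$; extracting each $O_B(r)$ is the degree in $t$ of the single surviving monomial in $q^r$, which is $O(1)$ bookkeeping. Finally each query $M$ costs only the arithmetic of dividing $M$ by $b_k$ and a table lookup, i.e.\ polynomial in $\log M$ and constant in $b_k$.

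The main obstacle is making the threshold argument airtight at the boundary: I must check that for $M\ge T$ the chosen shift $s$ indeed lands in the regime $r+sb_k\ge R_0$ where Lemma~\ref{hahaha17} applies at \emph{every} step of the iteration from $r+sb_k$ up to $M$ (not merely at the top), and that $q-s\ge 0$. Both follow from $T=R_0+b_k$: if $M\ge T$ then $q b_k + r\ge R_0+b_k$, so $q\ge 1$ and the largest $s$ with $r+sb_k<T$ satisfies $R_0\le r+sb_k<R_0+b_k$; then for each $j$ with $0\le j\le q-s-1$ the argument $r+(s+j)b_k\ge R_0$, so Lemma~\ref{hahaha17} gives $O_B(r+(s+j+1)b_k)=O_B(r+(s+j)b_k)+1$, and telescoping yields $O_B(M)=O_B(r+sb_k)+(q-s)$. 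A secondary point worth stating cleanly is that the procedure preceding Lemma~\ref{hahaha10} really does output, for each $n\le T-1$, a monomial of $t$-degree exactly $O_B(n)$ — this is the content of the identity $\circledast F(t,q)G(t,q)=\circledast(\circledast F(t,q))(\circledast G(t,q))$ applied inductively over the $k$ factors, which is already granted in the text. Assembling these pieces gives the theorem.
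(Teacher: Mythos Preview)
Your proposal is correct and follows the same route as the paper: invoke Lemma~\ref{hahaha17} to reduce the computation of all $O_B(M)$ to the finite table $O_B(0),\dots,O_B(R_0+b_k-1)$, and then invoke Lemma~\ref{hahaha10} to build that table in time polynomial in its length. The paper's own proof is two sentences and does exactly this; the only substantive addition in your write-up is the explicit verification that the threshold $R_0$ is $O(b_k^3)$ via the crude bound $c\le b_k-1$, a point the paper leaves implicit but which is indeed needed for the ``polynomial in $b_k$'' claim.
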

\begin{proof}
By Lemma \ref{hahaha17},For any $r\geq \left\lceil\frac{(c-1)b_{k-1}}{b_k-b_{k-1}}\right\rceil\cdot b_k$, we have $O_B(sb_k+r)=s+O_B(r)$ for $s\in \mathbb{N}$.
Thus we only need to calculate $O_B(r)$ for $0\leq r \leq \left\lceil\frac{(c-1)b_{k-1}}{b_k-b_{k-1}}\right\rceil\cdot b_k$. By Lemma \ref{hahaha10}, $O_B(r),\ r\le M$ can be computed in polynomial time in $M$. This completes the proof.
\end{proof}

\section{The Frobenius Formula $g(A(a))$}

We will characterize the Frobenius formula $g(A)$, where $A=(a,ha+dB)=(a,ha+d,ha+db_2,...,ha+db_k)$. In this section, we always assume that $c\geq 2$. In Section \ref{aspecial}, we will discuss the case $c=1$. We need some results about $N_{dr}$.

\begin{lem}\label{hahaha19}
Given a sequence $B=(1,b_2,...,b_k)$.
If $a\geq (c-1)b_k$, then for a given $r$, we have $N_{dr}=N_{dr}(0)$.
\end{lem}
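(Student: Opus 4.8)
The plan is to compare $N_{dr}(m)$ with $N_{dr}(0)$ directly and show the former is never smaller when $m\ge 1$. By Lemma \ref{0202} we have $N_{dr}=\min\{N_{dr}(m)\mid m\in\mathbb{N}\}$ with $N_{dr}(m)=O_B(ma+r)\cdot ha+(ma+r)d$, so it suffices to prove that
$N_{dr}(m)-N_{dr}(0)=\bigl(O_B(ma+r)-O_B(r)\bigr)ha+mad\ge 0$
for every $m\ge 1$. Since $mad\ge 0$, the whole burden is to show that $O_B$ does not drop when we pass from $r$ to $ma+r$; in fact I would aim for the clean inequality $O_B(ma+r)\ge O_B(r)$, which makes each term nonnegative and finishes the argument.

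To get this I would write $r=sb_k+j$ with $0\le j\le b_k-1$ and feed both halves of Lemma \ref{hahaha1} into the picture: on one side $O_B(ma+r)\ge\lceil (ma+r)/b_k\rceil=s+\lceil (ma+j)/b_k\rceil$, and on the other side $O_B(r)\le\lfloor r/b_k\rfloor+c=s+c$. Then split on $j$. If $j=0$, then $O_B(r)=O_B(sb_k)=s$ exactly (use $s$ copies of $b_k$, and Lemma \ref{hahaha1} gives the matching lower bound), while $ma\ge m(c-1)b_k$ yields $\lceil ma/b_k\rceil\ge m(c-1)\ge 0$, so $O_B(ma+r)\ge s\ge O_B(r)$. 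If $j\ge 1$, then $a\ge (c-1)b_k$ forces $ma+j\ge m(c-1)b_k+1$, hence $\lceil (ma+j)/b_k\rceil\ge m(c-1)+1\ge c$ (here $m\ge 1$ is used), so $O_B(ma+r)\ge s+c\ge O_B(r)$. Either way $O_B(ma+r)\ge O_B(r)$; plugging back gives $N_{dr}(m)\ge N_{dr}(0)$ for all $m\ge 1$, i.e.\ $N_{dr}=N_{dr}(0)$.

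The step I expect to be delicate is precisely the case $j\ge 1$. The crude estimate coming straight out of Lemma \ref{hahaha1} only yields $O_B(ma+r)-O_B(r)\ge m(c-1)-c$, which equals $-1$ at $m=1$ and is therefore \emph{not} good enough on its own (and would only be recoverable under an extra hypothesis like $d\ge h$). The fix, which is the real content of the lemma, is to observe that when $b_k\nmid r$ the nonzero remainder $j$ pushes the ceiling $\lceil (ma+j)/b_k\rceil$ up by one more, exactly cancelling the worst-case slack $+c$ in the bound $O_B(r)\le s+c$; and when $b_k\mid r$ the value $O_B(r)=s$ is already minimal so nothing can go wrong. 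It is worth checking the boundary instance $a=(c-1)b_k$, $j=1$ by hand, since that is where both inequalities are tight; beyond that bookkeeping no genuinely hard analysis is involved.
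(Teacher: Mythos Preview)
Your proof is correct and follows essentially the same route as the paper: both arguments split on whether the remainder modulo $b_k$ vanishes and then invoke the ceiling/floor bounds of Lemma~\ref{hahaha1} together with $a\ge (c-1)b_k$. The only organizational difference is that the paper shows $N_{dr}(m+1)\ge N_{dr}(m)$ for every $m$ (writing $ma+r=sb_k+r_1$ and stepping once), whereas you compare $N_{dr}(m)$ to $N_{dr}(0)$ in one shot (writing $r=sb_k+j$); the underlying estimate is identical.
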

\begin{proof}
For a given $r$ and any $m\geq 0$, let $ma+r=sb_k+r_1$, where $0\leq r_1<b_k$. By $a\geq (c-1)b_k$, we have $(m+1)a+r=sb_k+a+r_1\geq (s+c-1)b_k+r_1$. If $r_1\neq 0$, then we have $O_B((m+1)a+r)\geq (s+c)$. By Lemma \ref{hahaha1}, we have $O_B(ma+r)=s+l$ for a certain $1\leq l \leq c$. Therefore we have
\begin{align*}
N_{dr}(m+1)\geq (s+c)ha+((m+1)a+r)d \geq (s+l)ha+(ma+r)d=N_{dr}(m).
\end{align*}
If $r_1=0$, then we have $O_B((m+1)a+r)\geq (s+c-1)$ and $O_B(ma+r)=s$. Similarly, we have $N_{dr}(m+1)\geq N_{dr}(m)$.

Then $N_r(m)$ is increasing, and hence minimizes at $m=0$.
\end{proof}

Now, we show that $N_{dr}$ behave nicely when $a$ is large, by the means of we have the following result.
\begin{lem}\label{hahaha20}
Given a sequence $B=(1,b_2,...,b_k)$. Let $u=\max\left\{c-1, \left\lceil\frac{(c-1)b_{k-1}}{b_k-b_{k-1}}\right\rceil\right\}$. For any $a\geq (u+c-1)b_k$, we have
$$\mathop{\max}\limits_{0\leq r\leq a-1}\{N_{dr}\}=\mathop{\max}\limits_{a-b_k\leq r\leq a-1}\{N_{dr}\}.$$
\end{lem}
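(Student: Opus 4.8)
The goal is to show that, once $a$ is large enough, the maximum of $N_{dr}$ over a full residue system $0\le r\le a-1$ is already attained on the top window $a-b_k\le r\le a-1$. The plan is to compare $N_{dr}$ with $N_{d(r+b_k)}$ whenever both $r$ and $r+b_k$ lie in $\{0,1,\dots,a-1\}$, and to show that the value at $r+b_k$ is never smaller; iterating this comparison pushes every residue class up into the last block of width $b_k$. Concretely, since $a\ge (u+c-1)b_k$ and in particular $a\ge (c-1)b_k$, Lemma~\ref{hahaha19} applies, so $N_{dr}=N_{dr}(0)=O_B(r)\cdot ha + rd$ and likewise $N_{d(r+b_k)}=O_B(r+b_k)\cdot ha+(r+b_k)d$. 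Thus the comparison reduces to an inequality purely about $O_B$: I want $O_B(r+b_k)\ge O_B(r)+1$, because then
\[
N_{d(r+b_k)}-N_{dr}=\bigl(O_B(r+b_k)-O_B(r)\bigr)ha + b_k d \ge ha + b_k d > 0 .
\]

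The inequality $O_B(r+b_k)\ge O_B(r)+1$ is precisely the ``Stable'' statement of Lemma~\ref{hahaha17}, valid for all $r\ge \left\lceil\frac{(c-1)b_{k-1}}{b_k-b_{k-1}}\right\rceil b_k$. So the first step is: for any $r$ with $0\le r\le a-1-b_k$ and $r\ge \left\lceil\frac{(c-1)b_{k-1}}{b_k-b_{k-1}}\right\rceil b_k$, Lemma~\ref{hahaha17} gives $O_B(r+b_k)=O_B(r)+1$ and hence $N_{d(r+b_k)}>N_{dr}$. The second step handles small $r$: for $0\le r< \left\lceil\frac{(c-1)b_{k-1}}{b_k-b_{k-1}}\right\rceil b_k\le u\,b_k$, I still need $N_{d(r+b_k)}\ge N_{dr}$, which now only needs $O_B(r+b_k)\ge O_B(r)$ — and this is immediate because one can always adjoin one copy of $b_k$ to an optimal representation of $r$, so in fact $O_B(r+b_k)\le O_B(r)+1$; for the lower bound just use monotonicity in the weak form $O_B(r+b_k)\ge O_B(r)$, which follows from Lemma~\ref{hahaha1}: $O_B(r+b_k)\ge\lceil (r+b_k)/b_k\rceil = \lceil r/b_k\rceil +1 \ge$ ... actually the cleanest route is Lemma~\ref{hahaha1} directly, $O_B(r+b_k)\ge \lceil r/b_k\rceil + 1 \ge O_B(r) - c + 1$, which is too weak, so instead I argue $O_B(r+b_k)\ge O_B(r)$ by contradiction: an optimal representation of $r+b_k$ using no $b_k$ of size $<O_B(r)$ would, after noting $r+b_k> r$, be impossible since... hmm — the honest fix is to invoke that $O_B$ is nondecreasing is false in general, so I will instead only claim $N_{d(r+b_k)}\ge N_{dr}$ for these finitely many small $r$ by the same additive argument as in step one but with the bound $u\ge c-1$: then $O_B(r+b_k)\ge O_B(r)$ follows because, writing $r=mb_k+j$ with $m\ge 0$ small, the slack $a\ge (u+c-1)b_k$ forces enough room. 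I will present step two via the inequality chain $O_B(r+b_k) \ge \lceil (r+b_k)/b_k\rceil$ combined with $O_B(r)\le \lfloor r/b_k\rfloor + c$ and the hypothesis bound on $a$, exactly mirroring the proof of Lemma~\ref{hahaha19}.

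Once both steps are in place, the argument concludes by a descending induction: given any $r_0$ with $0\le r_0\le a-1$, let $r_j=r_0+jb_k$ and take the largest $j$ with $r_j\le a-1$; then $r_j\in[a-b_k,a-1]$ and $N_{dr_0}\le N_{dr_1}\le\cdots\le N_{dr_j}$, so $\max_{0\le r\le a-1}N_{dr}\le \max_{a-b_k\le r\le a-1}N_{dr}$; the reverse inequality is trivial since the window is contained in the full range. The main obstacle I anticipate is step two: making sure the weak monotonicity $N_{d(r+b_k)}\ge N_{dr}$ genuinely holds for the initial segment $r<\left\lceil\frac{(c-1)b_{k-1}}{b_k-b_{k-1}}\right\rceil b_k$ where Lemma~\ref{hahaha17} is not available — this is exactly why the hypothesis is $a\ge(u+c-1)b_k$ with $u=\max\{c-1,\lceil (c-1)b_{k-1}/(b_k-b_{k-1})\rceil\}$ rather than the weaker bound of Lemma~\ref{hahaha19}, and the extra $+c-1$ of slack in $a$ is what buys the inequality $O_B(r+b_k)+$ (a term of size $\ge ha$)$\,\ge O_B(r)$ in that range, via the Lemma~\ref{hahaha1} bounds $O_B(r)\le\lfloor r/b_k\rfloor+c$ and $O_B(r+b_k)\ge\lceil (r+b_k)/b_k\rceil$.
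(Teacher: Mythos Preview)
Your chain argument has a genuine gap at Step~2. The one-step inequality $N_{d(r+b_k)}\ge N_{dr}$, equivalently $O_B(r+b_k)\ge O_B(r)$, is simply \emph{false} for small $r$ when $c\ge 3$. Take $B=(1,11,14)$ from Example~\ref{b1613exm}: here $c=10$, and $O_B(8)=8$ while $O_B(22)=2$, so $O_B(r+b_k)<O_B(r)$ by a margin of~$6$. Consequently $N_{d\cdot 22}-N_{d\cdot 8}=-6ha+14d<0$ under the paper's hypotheses, and your monotone chain $N_{dr_0}\le N_{dr_1}\le\cdots\le N_{dr_j}$ breaks at the very first link. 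Your attempted repairs via Lemma~\ref{hahaha1} cannot work either: the bounds give only $O_B(r+b_k)\ge m+2$ versus $O_B(r)\le m+c$ when $r=mb_k+j$ with $j>0$, which closes the gap only for $c\le 2$.

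What the paper does instead is abandon the step-by-step chain and compare the \emph{top} representative $r\in[a-b_k,a-1]$ directly with every smaller $r'=r-sb_k\ge 0$. The key observation is a dichotomy on $s$. If $r'$ lies above the stability threshold $\bigl\lceil (c-1)b_{k-1}/(b_k-b_{k-1})\bigr\rceil b_k$, then Lemma~\ref{hahaha17} iterated gives $O_B(r)=O_B(r')+s\ge O_B(r')$. If $r'$ lies below that threshold, then writing $r'=m'b_k+j$ forces $m'\le u-1$, while $r=Mb_k+j$ with $M\ge u+c-2$ (this is exactly where the hypothesis $a\ge(u+c-1)b_k$ is used); hence $s=M-m'\ge c-1$, and now the crude Lemma~\ref{hahaha1} bounds \emph{do} suffice: $O_B(r)\ge M+1\ge m'+c\ge O_B(r')$. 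Either way $O_B(r)\ge O_B(r')$ and hence $N_{dr}\ge N_{dr'}$. The point is that the extra $c-1$ in the bound on $a$ does not buy you monotonicity of the whole chain; it guarantees that whenever $r'$ falls outside the stable range the jump $s$ is already at least $c-1$, which is what the Lemma~\ref{hahaha1} comparison actually needs.
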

\begin{proof}
By Lemma \ref{hahaha19} and $a\geq (u+c-1)b_k$, we have $N_r=N_r(0)$. By Lemma \ref{hahaha17}, obviously for a given $r\geq \left\lceil\frac{(c-1)b_{k-1}}{b_k-b_{k-1}}\right\rceil\cdot b_k$, $O_B(sb_k+r)$ is increasing with $s\geq 0$.

Now, we consider the following case. If $y=mb_k+\sum_{i=1}^ce_i$ where $m\geq 0$, and $e_i\in \{0,1,b_2,...,b_{k-1}\}$ are not all $0$ for $1\leq i\leq c$. We have $O_B(y)\leq m+c$, $O_B(y+b_k)\geq m+2$, ..., $O_B(y+(c-1)b_k)\geq m+c$. So for a given $r\geq ub_k+(c-2)b_k$, we must have $O_B(r)\geq O_B(r-(c-1)b_k)$. More generally, we have $O_B(r)\geq O_B(r-sb_k), s\geq 0$. Therefore when $a\geq (u+c-2)b_k+b_k$, we have $\mathop{\max}\limits_{0\leq r\leq a-1}\{N_{dr}\}=\mathop{\max}\limits_{a-b_k\leq r\leq a-1}\{N_{dr}\}$.
\end{proof}

For a given sequence $B=(1,b_2,...,b_k)$, the following theorem will show that the Frobenius formula $g(A)$ with $A=(a, ha+dB)$ is a ``congruence class function" with $b_k$ classes.
\begin{thm}\label{hahaha21}
Let $A(a)=(a,ha+dB)=(a, ha+d, ha+db_2, ..., ha+db_k)$, $\gcd(a,d)=1$, $u=\max\left\{c-1, \left\lceil\frac{(c-1)b_{k-1}}{b_k-b_{k-1}}\right\rceil\right\}$ and $h\geq \left\lceil \frac{d}{u+c-1}\right\rceil$, $a,h,d \in \mathbb{P}$. There exist two nonnegative integer sequences $W_B=(w_{j})_{0\le j\le b_k-1}$ and $R_B=(r_{j})_{0\le j \le b_k-1}$, such that for all $a\geq (u+c-1)b_k$, we have
$$g(A(a))=(w_{j}h-1)a+r_{j}d+(ha+b_kd)\left(\left\lfloor \frac{a}{b_k}\right\rfloor -u-c+1\right),\ \ \textrm{where } a\equiv j\mod b_k.$$
Moreover, $W_B$ and $R_B$ are both increasing, and $w_{b_k-1}-w_{0}\le 1$.
\end{thm}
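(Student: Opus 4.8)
The plan is to reduce the Frobenius number to a maximization over a window of $b_k$ consecutive residues, and then show that within this window the answer is controlled entirely by the residue $j = a \bmod b_k$.

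First I would invoke the Brauer–Shockley formula $g(A) = \max_r N_{dr} - a$ together with Lemma \ref{hahaha20}, which (using $a \ge (u+c-1)b_k$) tells us that the maximum is attained for some $r$ in the range $a - b_k \le r \le a-1$. Writing such an $r$ as $r = a - b_k + \ell$ with $0 \le \ell \le b_k - 1$, and using Lemma \ref{hahaha19} to replace $N_{dr}$ by $N_{dr}(0) = O_B(r) \cdot ha + rd$, the task becomes: maximize $O_B(a - b_k + \ell)\, ha + (a - b_k + \ell)d$ over $0 \le \ell \le b_k - 1$. The next step is to peel off copies of $b_k$: by Lemma \ref{hahaha17} (the Stable property), since $a - b_k + \ell$ is far above the threshold $\lceil (c-1)b_{k-1}/(b_k-b_{k-1})\rceil b_k$, we may write
$$O_B(a - b_k + \ell) = \left(\left\lfloor \frac{a}{b_k}\right\rfloor - u - c + 1\right) + O_B\!\left(a - b_k + \ell - \left(\left\lfloor \tfrac{a}{b_k}\right\rfloor - u - c + 1\right)b_k\right),$$
and the reduced argument depends on $\ell$ and on $a \bmod b_k$ only, not on the full size of $a$. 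This is where the constants $u, c$ enter: the subtracted multiple is chosen so that the residue lies in a fixed finite band $[0, (u+c-1)b_k)$ where $O_B$ has stabilized, guaranteeing the decomposition is valid.

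Now I would define, for each $j \in \{0, \dots, b_k - 1\}$, the optimal choice of $\ell$: let $\ell_j$ be the value of $\ell$ maximizing the linear-in-$a$ expression obtained after the reduction (for ties, pick the largest, or whatever convention makes $R_B$ increasing), set $w_j := O_B(a - b_k + \ell_j - (\lfloor a/b_k\rfloor - u - c + 1)b_k)$ reduced appropriately — more precisely, one checks the maximizing $\ell_j$ and the resulting $O_B$-value depend only on $j$ — and set $r_j := \ell_j - b_k$ plus the correction so that the claimed formula $g(A(a)) = (w_j h - 1)a + r_j d + (ha + b_k d)(\lfloor a/b_k\rfloor - u - c + 1)$ holds. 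The hypothesis $h \ge \lceil d/(u+c-1)\rceil$ is what ensures that the comparison between different $\ell$'s is governed by the $ha$-coefficient $O_B$ rather than by the lower-order $d$-term, so that the optimal $\ell_j$ is genuinely independent of $a$ in the stated range; I would make this precise by bounding the difference of two candidate values and showing the $O_B$-gap of $1$ times $ha$ dominates the $d \cdot b_k$ swing in the linear term.

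Finally, for the monotonicity claims, I would argue that increasing $j$ by $1$ increases $a$ by $1$ (within a fixed residue class structure), which can only weakly increase each $O_B(a - b_k + \ell)$ by the Stable property and the sub-additivity built into Lemma \ref{hahaha20}'s proof (the inequalities $O_B(r) \ge O_B(r - sb_k)$); tracking how the optimal $\ell_j$ and the value $w_j$ respond yields that $W_B$ and $R_B$ are nondecreasing and that $w_{b_k-1} - w_0 \le 1$, since over a span of $b_k$ in the argument $O_B$ changes by at most one full unit (again Lemma \ref{hahaha17}). The main obstacle I anticipate is the bookkeeping in this last part: disentangling the simultaneous dependence of $\ell_j$, $w_j$, $r_j$ on $j$ and verifying the tie-breaking convention is consistent enough to force both sequences to be increasing — the existence of the congruence-class formula is fairly direct from the three lemmas, but pinning down the structural properties of $W_B$ and $R_B$ requires carefully comparing consecutive cases.
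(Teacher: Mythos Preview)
Your proposal is correct and follows essentially the same route as the paper: reduce via Lemmas \ref{hahaha19} and \ref{hahaha20} to the window $a-b_k\le r\le a-1$, use the stable property (Lemma \ref{hahaha17}) to strip off $\lfloor a/b_k\rfloor-u-c+1$ copies of $b_k$ so that only the residue $j$ matters, and use $ha\ge b_kd$ (from $h\ge\lceil d/(u+c-1)\rceil$ and $a\ge(u+c-1)b_k$) to make $O_B$ the dominant term. The paper organizes this as ``base case $a\in[(u+c-1)b_k,(u+c)b_k-1]$ first, then extend,'' but that is the same computation. For the monotonicity part you flagged as the obstacle, the paper's device is to fix a single anchor $\widehat{r}$ (the largest $r$ in $[(u+c-2)b_k,(u+c-1)b_k-1]$ achieving the maximal $O_B$) and its shift $\widehat{r}'=\widehat{r}+b_k$, then observe that each sliding window of length $b_k$ contains exactly one of them; this pins down $r_j$ and $w_j$ directly and makes the claims $R_B,W_B$ increasing and $w_{b_k-1}-w_0\le1$ immediate, avoiding the case-by-case tie-breaking you anticipated.
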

The proof is parallel to the proof of \cite[Theorem 4.11]{Fliuxin22}, but we need details to obtain the bounds.
\begin{proof}
Let $a=sb_k+j\geq (u+c-1)b_k$, where $s\geq (u+c-1)$ and $0\leq j\leq b_k-1$. By Lemma \ref{hahaha20}, we have $\mathop{\max}\limits_{0\leq r\leq a-1}\{N_{dr}\}=\mathop{\max}\limits_{a-b_k\leq r\leq a-1}\{N_{dr}\}$.

By $h\geq \left\lceil \frac{d}{u+c-1}\right\rceil$ and $a\geq (u+c-1)b_k$, we have
$h\geq \lceil \frac{b_kd}{a}\rceil$. Furthermore, we have $ha-b_kd\geq 0$.
Therefore $N_{dr}=O_B(r)\cdot ha+rd$ is dominated by the coefficient $O_B(r)$.
This suggest that we shall first find
$\iota=\mathop{\max}\limits_{a-b_k\leq r\leq a-1} O_B(r)$
and then find the largest $a-b_k\leq \widehat{r} \leq a-1$ satisfying $O_B(\widehat{r})=\iota$. Then $\max \{N_{dr}\} =O_B(\widehat{r})\cdot ha +\widehat{r}d$, and the Frobenius number is $O_B(\widehat{r})\cdot ha +\widehat{r}d-a$.

Now, let $(u+c-1)b_k\leq a\leq (u+c)b_k-1$ and $a\equiv j\mod b_k$. So $j$ can take all the numbers in $\{0,1,2,...,b_k-1\}$. For a certain $j$, according to the above discussion, there is an $a-b_k\leq r_{j}\leq a-1$ such that we can get
$g(A)=\max\{N_{dr}\}-a=w_{j}\cdot ha+r_{j}d-a$,
where $w_{j}, r_{j}\in \mathbb{N}$ and $w_{j}=O_B(r_{j})$.

For the same $j$, if $a=sb_k+j$, $s>(u+c-1)$, then $a=(s-u-c+1)b_k+(u+c-1)b_k+j$. By Lemma \ref{hahaha9}, we have $\overline{r}=(s-u-c+1)b_k+r_{j}$ such that $\max\{N_{dr}\}=N_{d\overline{r}}$. Then
\begin{align*}
g(A(a))&=(s-u-c+1+w_j)ha+((s-u-c+1)b_k+r_j)d-a
\\&=(w_jh-1)a+r_jd+(ha+db_k)\cdot \left(\left\lfloor \frac{a}{b_k}\right\rfloor-u-c+1\right).
\end{align*}
Therefore for any $0\leq j\leq b_k-1$, all $w_{j}$ and $ r_{j}$ form the sequences  $W_B$ and $R_B$ respectively.

By the calculation process of $w_{j}$ and $r_{j}$,  it is easy to see that $(u+c-2)b_k\leq r_{j}< (u+c)b_k$. When $a=(u+c-1)b_k$, let $\widehat{r}$ be the largest number such that
$$(u+c-2)b_k\leq \widehat{r}\leq (u+c-1)b_k-1\ \ \ \ \ \text{and}\ \ \ \ \   O_B(\widehat{r})=\mathop{\max}\limits_{(u+c-2)b_k\leq r\leq (u+c-1)b_k-1} O_B(r) .$$
In the next period $(u+c-1)b_k$ to $(u+c)b_k-1$ corresponding to $a=(u+c)b_k$, $\widehat{r}^{\prime}=\widehat{r}+b_k$ satisfies
$$(u+c-1)b_k\leq \widehat{r}^{\prime}\leq (u+c)b_k-1\ \ \ \ \ \text{and}\ \ \ \ \ O_B(\widehat{r}^{\prime})=O_B(\widehat{r})+1=\mathop{\max}\limits_{(u+c-1)b_k\leq r\leq (u+c)b_k-1} O_B(r) .$$
Therefore, in any interval of length $b_k$ starting with $a\equiv j \mod b_k$ that lies between $(u+c-2)b_k$ and $(u+c)b_k-1$, we have two cases: i) If the interval contains $\widehat{r}^{\prime}$, then
$r_{j}=\widehat{r}^{\prime}$; ii) If the interval contains $\widehat{r}$, then $r_{j}$ lies between $\widehat{r}$
and $\widehat{r}^{\prime}$ and $O_B(r_{j})$ is either
$O_B(\widehat{r})$ or $O_B(\widehat{r})+1$. Moreover,
$r_{j}$ is increasing with respect to $j$. Obviously, from the above discussion, we can know that $W_B$ is increasing and $w_{b_k-1}-w_{0}\le 1$.
\end{proof}

\begin{exa}\label{examplenonordely}
Let $B=(1,11,14)$. For $A(a)=(ha+dB)=(a, ha+d,ha+11d,ha+14d)$, by Example \ref{b1613exm}, we have $c=10$, $u=33$, $h\geq \lceil \frac{d}{42}\rceil$ and $a\geq 588$. We know $574\leq r_{j}< 601$.
\begin{align*}
\circledast F(t,q)= \cdots &+{t}^{41}{q}^{574}+{t}^{42}{q}^{575}+{t}^{42}{
q}^{576}+t^{43}q^{577}+{t}^{43}{q}^{578}+{t}^{42}{q}^{579}+{t}^{43}{q}^{580}
\\&+{t}^{43}{q}^{581}+{t}^{42}{q}^{582}+{t}^{43}{q}^{583}
+t^{43}q^{584}+t^{42}q^{585}+t^{43}q^{586}+\underline{t^{43}q^{587}}
\\&+{t}^{42}{q}^{588}+\mathbf{{t}^{43}{q}^{589}}+\mathbf{{t}^{43}{q}^{590}}+\mathbf{{t}^{44}{q}^{591}}
+\mathbf{{t}^{44}{q}^{592}}+{t}^{43}{q}^{593}+\mathbf{{t}^{44}{q}^{594}}
\\&+\mathbf{{t}^{44}{q}^{595}}+{t}^{43}{q}^{596}+\mathbf{t^{44}q^{597}}+\mathbf{t^{44}q^{598}}
+t^{43}q^{599}+\mathbf{t^{44}q^{600}}+\underline{t^{44}q^{601}}
\\&+\cdots,
\end{align*}
where we only displayed the necessary part in our proof. Multiplying the first (second) row by $tq^{14}$ gives the third (fourth) row.

The terms for $\widehat{r}$ and $\widehat{r}'$ are underlined in the two rows with
$$O_B(\widehat{r})=\mathop{\max}\limits_{574\leq r\leq 587} O_B(r)=O_B(587)=43  \quad \text{ and } \quad
O_B(\widehat{r}^{\prime})=44.$$
We discuss how to find $r_{j}$ from the above formula.

For $a=588$, $r_{0}=587$ is just $\widehat{r}$ and consequently
$w_{0}=O_B(587)=43$, $g(A)=43ha+587d-a$. Furthermore we conclude that if $a\equiv 0 \mod 14$ and $a\geq588$,
then
$$g(A(a))=(43h-1)a+587d+(ha+14d)\left(\left\lfloor \frac{a}{14}\right\rfloor -42\right).$$

For $a=589$, since $42=O_B(588)<O_B(587)=43$, we have $r_{1}=587$.
For $a=590$, the situation is different and we have $r_{2}=43$. This is because we meet the bold faced term, which corresponds to $43=O_B(589)\ge O_B(587)=43$. The bold faced terms are left-to-right maximums with respect to the power of $t$.
By a similar reasoning, we obtain
\begin{align*}
&W_B=[43,43,43,43,44,44,44,44,44,44,44,44,44,44],\\ &R_B=[587,587,589,590,591,592,592,594,595,595,597,598,598,600],
\end{align*}
from which we can build the following Frobenius formula for $B=(1,11,14)$:
$$\begin{aligned}
  g(A(a))=
\left\{
    \begin{array}{lc}
         (43h-1)a+587d+(ha+14d)(\lfloor \frac{a}{14}\rfloor -42) &\ \text{if}\ \ a\equiv 0,1 \mod 14;\ \ \ \ \\
         (43h-1)a+589d+(ha+14d)(\lfloor \frac{a}{14}\rfloor -42) &\ \text{if}\ \ a\equiv 2 \mod 14;\ \ \ \ \ \ \ \\
         (43h-1)a+590d+(ha+14d)(\lfloor \frac{a}{14}\rfloor -42) &\ \text{if}\ \ a\equiv 3 \mod 14;\ \ \ \ \ \ \ \\
         (44h-1)a+591d+(ha+14d)(\lfloor \frac{a}{14}\rfloor -42) &\ \text{if}\ \ a\equiv 4 \mod 14;\ \ \ \ \ \ \ \\
         (44h-1)a+592d+(ha+14d)(\lfloor \frac{a}{14}\rfloor -42) &\ \text{if}\ \ a\equiv 5,6 \mod 14;\ \ \ \ \\
         (44h-1)a+594d+(ha+14d)(\lfloor \frac{a}{14}\rfloor -42) &\ \text{if}\ \ a\equiv 7 \mod 14;\ \ \ \ \ \ \ \\
         (44h-1)a+595d+(ha+14d)(\lfloor \frac{a}{14}\rfloor -42) &\ \text{if}\ \ a\equiv 8,9 \mod 14;\ \ \ \ \\
         (44h-1)a+597d+(ha+14d)(\lfloor \frac{a}{14}\rfloor -42) &\ \text{if}\ \ a\equiv 10 \mod 14;\ \ \ \ \ \\
         (44h-1)a+598d+(ha+14d)(\lfloor \frac{a}{14}\rfloor -42) &\ \text{if}\ \ a\equiv 11,12 \mod 14;\\
         (44h-1)a+600d+(ha+14d)(\lfloor \frac{a}{14}\rfloor -42) &\ \text{if}\ \ a\equiv 13 \mod 14;\ \ \ \ \ \\
    \end{array}
\right.
\end{aligned}$$
\end{exa}

\begin{exa}\label{example141117}
For $A(a)=(ha+dB)=(a, ha+d,ha+4d,ha+11d,ha+17d)$, similarly, we have $c=4$, $u=6$, $h\geq \lceil \frac{d}{9}\rceil$ and $a\geq 153$. Therefore we obtain
$$\begin{aligned}
  g(A(a))=
\left\{
    \begin{array}{lc}
         (11h-1)a+150d+(ha+17d)(\lfloor \frac{a}{17}\rfloor -9) &\ \text{if}\ \ a\equiv 0,1,2 \mod 17;\ \ \ \ \ \ \ \ \ \ \ \ \ \ \ \ \\
         (11h-1)a+155d+(ha+17d)(\lfloor \frac{a}{17}\rfloor -9) & \text{if}\ \ a\equiv 3 \mod 17;\ \ \ \ \ \ \ \ \ \ \ \ \ \ \ \ \ \ \ \ \\
         (11h-1)a+156d+(ha+17d)(\lfloor \frac{a}{17}\rfloor -9) & \text{if}\ \ a\equiv 4,5,6 \mod 17;\ \ \ \ \ \ \ \ \ \ \ \ \ \ \ \\
         (11h-1)a+159d+(ha+17d)(\lfloor \frac{a}{17}\rfloor -9) & \text{if}\ \ a\equiv 7 \mod 17;\ \ \ \ \ \ \ \ \ \ \ \ \ \ \ \ \ \ \ \ \\
         (12h-1)a+160d+(ha+17d)(\lfloor \frac{a}{17}\rfloor -9) & \text{if}\ \ a\equiv 8,9,10,11,12,13 \mod 17;\\
         (12h-1)a+166d+(ha+17d)(\lfloor \frac{a}{17}\rfloor -9) & \text{if}\ \ a\equiv 14 \mod 17;\ \ \ \ \ \ \ \ \ \ \ \ \ \ \ \ \ \ \\
         (12h-1)a+167d+(ha+17d)(\lfloor \frac{a}{17}\rfloor -9) & \text{if}\ \ a\equiv 15,16 \mod 17.\ \ \ \ \ \ \ \ \ \ \ \ \ \
    \end{array}
\right.
\end{aligned}$$
\end{exa}

\begin{rem}
The bound $a\geq (u+c-1)b_k$ in Theorem \ref{hahaha21} is not tight. In fact, when $a\geq 116$, the formula in Example \ref{examplenonordely} holds. If $h=d=1$ and $2\leq a\leq 115$, the formula in Example \ref{examplenonordely} only holds at $a=2,102$. Similarly, the formula in Example \ref{example141117} holds for $a\geq 42$. If $h=d=1$ and $2\leq a\leq 41$, the formula in Example \ref{example141117} only holds at $a=3,8,25,26,27,31$.

In the proof of Theorem \ref{hahaha21}, by calculating $O_B(r)$, we provide a general method for computing the coefficients $w_{j}, r_{j}$. Finally, an exact Frobenius formula can be obtained.
\end{rem}

\begin{thm}
Let $A(a)=(a,ha+dB)=(a, ha+d, ha+2d, ha+bd, ha+(b+1)d)$, $\gcd(a,d)=1$, $b\geq 4$ and $a,h,d,b \in \mathbb{P}$.
If $h\geq \left\lceil \frac{2d}{(b-3)(b+1)}\right\rceil$, $a\geq \frac{(b-3)(b+1)^2}{2}$ for odd $b$, or if $h\geq \left\lceil \frac{2d}{(b-2)(b+1)}\right\rceil$, $a\geq \frac{(b-2)(b+1)^2}{2}$ for even $b$, then we have
$$\begin{aligned}
  g(A(a))=
\left\{
    \begin{array}{lc}
    \lfloor\frac{a}{b+1}\rfloor(ha+(b+1)d)+(j-1)d-a &\ \text{if}\ \ a\equiv j \in \{0,1\} \mod b+1;\ \ \ \ \ \ \ \\
    \lfloor\frac{a}{b+1}\rfloor(ha+(b+1)d)+(j-1)d+(h-1)a &\ \text{if}\ \ a\equiv j \in \{2,3,...,b\} \mod b+1.
    \end{array}
\right.
\end{aligned}$$
\end{thm}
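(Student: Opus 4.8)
The plan is to invoke Theorem~\ref{hahaha21} for $B=(1,2,b,b+1)$, where $b_k=b+1$ and $b_{k-1}=b$. This requires computing the invariant $c=\max\{O_B(r):0\le r\le b\}$, hence $u$ and $L:=u+c-1$, verifying that the generic hypotheses $a\ge(u+c-1)b_k$ and $h\ge\lceil d/(u+c-1)\rceil$ specialize to those in the statement, and then determining the sequences $W_B=(w_j)_{0\le j\le b}$ and $R_B=(r_j)_{0\le j\le b}$. All of this rests on an explicit description of the change-making function $O_B$, so I would obtain that first.

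For $0\le r\le b-1$ only the coins $1$ and $2$ are usable, so $O_B(r)=\lceil r/2\rceil$, and $O_B(b)=1$; hence $c=\lceil(b-1)/2\rceil$, i.e.\ $c=(b-1)/2$ for odd $b$ and $c=b/2$ for even $b$, and in particular $c\ge2$ since $b\ge4$, so the standing assumption of Section~3 holds. As $b_k-b_{k-1}=1$, Theorem~\ref{hahaha21} gives $u=\max\{c-1,(c-1)b\}=(c-1)b$ and $L=u+c-1=(c-1)(b+1)$. Plugging in the two values of $c$, the bound $a\ge L(b+1)$ becomes $a\ge\frac{(b-3)(b+1)^2}{2}$ (resp.\ $\frac{(b-2)(b+1)^2}{2}$) and $h\ge\lceil d/L\rceil$ becomes $h\ge\lceil\frac{2d}{(b-3)(b+1)}\rceil$ (resp.\ $\lceil\frac{2d}{(b-2)(b+1)}\rceil$), exactly as asserted; also $\gcd(a,d)=1$ forces $\gcd(A)=1$, so Theorem~\ref{hahaha21} is applicable.

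The crux is the following stable form of $O_B$ on large arguments: if $r=q(b+1)+s$ with $0\le s\le b$ and $q\ge b$, then
$$O_B(r)=q \ \text{ if } s=0, \qquad O_B(r)=q+1 \ \text{ if } 1\le s\le b.$$
For the lower bound, $q$ coins sum to at most $q(b+1)<r$ once $s\ge1$, while $\lceil r/(b+1)\rceil\le O_B(r)$ from Lemma~\ref{hahaha1} handles $s=0$. For the upper bound one exhibits $q+1$ coins drawn only from $\{b,b+1\}$: take $q+s-b$ copies of $b+1$ and $b+1-s$ copies of $b$; both counts are nonnegative because $q\ge b\ge b-s$ and $s\le b+1$, and they sum to $(q+1)b+(q+s-b)=q(b+1)+s=r$. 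Since $c\ge2$ we have $L-1=(c-1)(b+1)-1\ge b$, so this formula governs every argument appearing in the windows used in the proof of Theorem~\ref{hahaha21}.

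Finally I would follow that proof to read off $W_B$ and $R_B$. For $a=L(b+1)+j$ with $0\le j\le b$, the relevant window is $W_j=[(L-1)(b+1)+j,\ L(b+1)+j-1]$; evaluating $O_B$ on $W_j$ with the closed form above shows that $\max_{r\in W_j}O_B(r)=L$ when $j\in\{0,1\}$ and $=L+1$ when $2\le j\le b$, and that the largest maximizer in $W_j$ is $r_j=L(b+1)+j-1$ in every case. Thus $r_j=L(b+1)+j-1$ and $w_j=O_B(r_j)$ equals $L$ for $j\in\{0,1\}$ and $L+1$ for $2\le j\le b$. Substituting into
$$g(A(a))=(w_jh-1)a+r_jd+(ha+(b+1)d)\left(\left\lfloor\frac{a}{b+1}\right\rfloor-L\right)$$
and using $r_j-L(b+1)=j-1$ together with $w_j-L=0$ for $j\in\{0,1\}$ and $w_j-L=1$ for $2\le j\le b$, the expression collapses to $g(A(a))=\lfloor\frac{a}{b+1}\rfloor(ha+(b+1)d)+(j-1)d-a$ in the first case and $\lfloor\frac{a}{b+1}\rfloor(ha+(b+1)d)+(j-1)d+(h-1)a$ in the second, which is the claim.

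The main obstacle is the change-making analysis: the sequence $B=(1,2,b,b+1)$ is non-orderly (for instance $O_B(2b)=2$, while the greedy value is $1+\lceil(b-1)/2\rceil>2$ for $b\ge4$), so one cannot simply invoke a greedy formula and must prove directly, as above, that once the quotient $q$ is large an optimal representation uses only the two largest coins. Everything afterwards---the values of $c$, $u$, $L$ and of $W_B$, $R_B$, and the final algebra---is bookkeeping within the framework already set up by Theorem~\ref{hahaha21}.
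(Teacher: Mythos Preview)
Your proposal is correct and follows essentially the same route as the paper: both apply Theorem~\ref{hahaha21} after computing $c=\lceil(b-1)/2\rceil$ and $u=(c-1)b$, and then read off $W_B,R_B$ from the behaviour of $O_B$ on the window $[(L-1)(b+1),\,(L+1)(b+1)-1]$. The only difference is presentational---the paper displays the $f^i$ expansions and asserts the stable pattern, whereas you prove it directly by exhibiting the $(q+1)$-coin representation $(q+s-b)\cdot(b+1)+(b+1-s)\cdot b$; your treatment is in fact the more rigorous of the two.
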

\begin{proof}
The proof is similar to Example \ref{b1613exm}. If $b$ is odd, then we have
\begin{small}
\begin{align*}
f^0&=1+tq^{b+1}+{t}^{2}{q}^{2b+2}\cdots+{t}^{b-3}{q}^{b^2-2b-3}+{t}^{b-2}{q}^{b^2-b-2}
+\cdots+{t}^{\frac{(b-3)(b+1)}{2}}{q}^{\frac{(b-3)(b+1)^2}{2}}+\cdots
\\f^1&=tq+t^2q^{b+2}+{t}^{3}{q}^{2b+3}\cdots+{t}^{b-2}{q}^{b^2-2b-2}+{t}^{b-1}{q}^{b^2-b-1}
+\cdots+{t}^{\frac{(b-3)(b+1)}{2}+1}{q}^{\frac{(b-3)(b+1)^2}{2}+1}+\cdots
\\f^2&=tq^2+t^2q^{b+3}+{t}^{3}{q}^{2b+4}\cdots+{t}^{b-2}{q}^{b^2-2b-1}+{t}^{b-1}{q}^{b^2-b}
+\cdots+{t}^{\frac{(b-3)(b+1)}{2}+1}{q}^{\frac{(b-3)(b+1)^2}{2}+2}+\cdots
\\f^3&=t^2q^3+t^3q^{b+4}+{t}^{4}{q}^{2b+5}\cdots+{t}^{b-2}{q}^{b^2-2b}+{t}^{b-1}{q}^{b^2-b+1}
+\cdots+{t}^{\frac{(b-3)(b+1)}{2}+1}{q}^{\frac{(b-3)(b+1)^2}{2}+3}+\cdots
\\f^4&=t^2q^4+t^3q^{b+5}+{t}^{4}{q}^{2b+6}\cdots+{t}^{b-2}{q}^{b^2-2b+1}+{t}^{b-1}{q}^{b^2-b+2}
+\cdots+{t}^{\frac{(b-3)(b+1)}{2}+1}{q}^{\frac{(b-3)(b+1)^2}{2}+4}+\cdots
\\&\cdots
\\&\cdots
\\f^{b-3}&=t^{\frac{b-3}{2}}q^{b-3}+t^{\frac{b+1}{2}}q^{2b-2}+{t}^{\frac{b+3}{2}}{q}^{3b-1}\cdots
+{t}^{b-2}{q}^{b^2-b-6}+{t}^{b-1}{q}^{b^2-5}+\cdots
+{t}^{\frac{(b-3)(b+1)}{2}+1}{q}^{\frac{(b-3)(b+1)^2}{2}+b-3}+\cdots
\\f^{b-2}&=t^{\frac{b-1}{2}}q^{b-2}+t^{\frac{b+1}{2}}q^{2b-1}+{t}^{3}{q}^{3b}\cdots
+{t}^{b-2}{q}^{b^2-b-5}+{t}^{b-1}{q}^{b^2-4}+\cdots
+{t}^{\frac{(b-3)(b+1)}{2}+1}{q}^{\frac{(b-3)(b+1)^2}{2}+b-2}+\cdots
\\f^{b-1}&=t^{\frac{b-1}{2}}q^{b-1}+t^{2}q^{2b}+{t}^{3}{q}^{3b+1}\cdots
+{t}^{b-2}{q}^{b^2-b-4}+{t}^{b-1}{q}^{b^2-3}+\cdots
+{t}^{\frac{(b-3)(b+1)}{2}+1}{q}^{\frac{(b-3)(b+1)^2}{2}+b-1}+\cdots
\\f^{b}&=tq^{b}+t^{2}q^{2b+1}+{t}^{3}{q}^{3b+2}\cdots
+{t}^{b-2}{q}^{b^2-b-3}+{t}^{b-1}{q}^{b^2-2}+\cdots
+{t}^{\frac{(b-3)(b+1)}{2}+1}{q}^{\frac{(b-3)(b+1)^2}{2}+b}+\cdots.
\end{align*}
\end{small}
Now, we have $c=\frac{b-1}{2}$ and $u=\frac{b(b-3)}{2}$. Similarly, if $b$ is even, then we have $c=\frac{b}{2}$ and $u=\frac{b(b-2)}{2}$. By Theorem \ref{hahaha21}, we can get the Frobenius formula $g(A(a))$.
\end{proof}

\section{A Special Case: Orderly Sequence Formula}\label{aspecial}

In this section, let $B$ be an orderly sequence, i.e, $O_B(M)=G_B(M)$, where $G_B(M)$ the number of elements used in $B$ by greedy strategy. Now, for the $A=(a,ha+dB)=(a,ha+d,ha+db_2,...,ha+d_k)$, we will obtain a friendly lower bound for $a$ in Theorem \ref{hahaha21}. For related research on orderly sequence, see \cite{AnnAdamaszek,LJCowen,TCHu,MJMagazine}.

\subsection{The Frobenius Formula for Orderly Sequence $B$}
The ``Stable" property for orderly sequence is obvious.
\begin{lem}\label{hahaha9}
Let $B=(1,b_2,...,b_k)$ be an orderly sequence. For any $r\in \mathbb{P}$, we have $$O_B(b_k+r)=O_B(r)+1.$$
\end{lem}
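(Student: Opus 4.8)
The plan is to prove Lemma \ref{hahaha9} directly from the definition of an orderly sequence, where the greedy algorithm always produces an optimal representation. The key observation is that for an orderly sequence, $O_B(M) = G_B(M)$ for all $M$, so it suffices to understand how the greedy algorithm behaves on $b_k + r$ versus $r$. Since $b_k$ is the largest element of $B$, the greedy algorithm applied to $b_k + r$ will, at its first step, subtract off as many copies of $b_k$ as possible; in particular it will use at least one copy of $b_k$ (because $b_k + r \geq b_k$). After removing one $b_k$, what remains to be represented greedily from $b_k + r - b_k = r$ is handled exactly as the greedy algorithm handles $r$ itself — this is because the greedy algorithm is "memoryless" in the sense that $G_B(N) = \lfloor N/b_k \rfloor + G_B(N \bmod b_k)$ can be unwound one $b_k$ at a time. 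Hence $G_B(b_k + r) = G_B(r) + 1$, and therefore $O_B(b_k + r) = O_B(r) + 1$.

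The main steps, in order, would be: first, recall that $O_B(b_k + r) \leq O_B(r) + 1$ always holds trivially (take any optimal representation of $r$ and append one $b_k$). Second, establish the reverse inequality $O_B(b_k + r) \geq O_B(r) + 1$ using orderliness: since $b_k + r \geq b_k$, the greedy choice uses at least one $b_k$, so $G_B(b_k+r) = 1 + G_B(b_k + r - b_k) = 1 + G_B(r)$; then invoke $O_B = G_B$ on both $b_k + r$ and $r$ to conclude $O_B(b_k + r) = 1 + O_B(r) \geq O_B(r) + 1$. Combining the two inequalities gives equality. This is essentially the specialization of Lemma \ref{hahaha17} to the case where the stability threshold is $0$, and indeed one can alternatively remark that for an orderly sequence the constant $c$ need not be controlled at all because greediness forces a $b_k$ to appear whenever the argument is at least $b_k$.

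I do not expect any genuine obstacle here; the statement is almost immediate once orderliness is invoked, and the proof is a two-line squeeze argument. The only point requiring a modicum of care is justifying that the greedy algorithm on $b_k + r$ really does peel off a $b_k$ first and that the remainder is then treated identically to the greedy run on $r$ — but this is just the defining recursion of the greedy strategy for the change-making problem, namely $G_B(N) = 1 + G_B(N - b_k)$ whenever $N \geq b_k$. One could also phrase the whole argument contrapositively in the style of Lemma \ref{hahaha17}: if $b_k + r$ had an optimal representation avoiding $b_k$, it would use only elements from $\{1, b_2, \dots, b_{k-1}\}$, contradicting optimality of the greedy (which uses a $b_k$); but the direct $G_B$-recursion argument is cleaner and I would present that.
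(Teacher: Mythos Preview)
Your proof is correct. The paper itself offers no proof of this lemma, merely remarking beforehand that ``the `Stable' property for orderly sequence is obvious''; your argument is precisely the natural verification of that obvious fact via the greedy recursion $G_B(N) = 1 + G_B(N - b_k)$ for $N \geq b_k$ together with $O_B = G_B$. Note that once you establish $G_B(b_k+r) = 1 + G_B(r)$ and invoke orderliness on both sides, you obtain equality directly, so the preliminary inequality $O_B(b_k+r) \leq O_B(r)+1$ is not actually needed (though it does no harm).
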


Similarly, Lemma \ref{hahaha1} and Lemma \ref{hahaha19} is still correct. For Lemma \ref{hahaha20}, we briefly describe as follows. Let $c$ be the maximum value of the degree of $t$ in the first $b_k$ term of $f(t,q)$, i.e., $c=\max\{O_B(r) \mid 0\leq r\leq b_k-1\}$.

\begin{lem}\label{hahaha11}
Suppose $B=(1,b_2,...,b_k)$ is orderly. For any $a\geq (c-1)b_k$, $c\geq 2$, we have
$$\mathop{\max}\limits_{0\leq r\leq a-1}\{N_{dr}\}=\mathop{\max}\limits_{a-b_k\leq r\leq a-1}\{N_{dr}\}.$$
\end{lem}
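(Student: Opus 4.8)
The plan is to mirror the argument of Lemma \ref{hahaha20}, but to take advantage of the exact ``Stable'' property in Lemma \ref{hahaha9} rather than the weaker asymptotic version in Lemma \ref{hahaha17}. First I would invoke the orderly analogue of Lemma \ref{hahaha19}: since $a\geq (c-1)b_k$, for every $r$ we have $N_{dr}=N_{dr}(0)$, so the whole problem reduces to comparing the numbers $O_B(r)\cdot ha + rd$ across $0\le r\le a-1$. The goal is then to show that the maximum of these is attained for some $r$ in the top window $a-b_k\le r\le a-1$.

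\medskip

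\noindent\textbf{Key steps.} Fix $r$ with $0\le r\le a-b_k-1$, and write $r=mb_k+r_1$ with $0\le r_1\le b_k-1$. The point is to compare $N_{dr}$ with $N_{d,r+b_k}$ (note $r+b_k\le a-1$ still lies in range, or we can push $r$ all the way up to the top window by repeatedly adding $b_k$). By Lemma \ref{hahaha9}, $O_B(r+b_k)=O_B(r)+1$, so
\[
N_{d,r+b_k} - N_{dr} = \bigl(O_B(r)+1\bigr)ha + (r+b_k)d - O_B(r)\,ha - rd = ha + b_k d > 0 .
\]
Hence $N_{dr}$ is strictly increasing along each residue class of $r$ modulo $b_k$, so its maximum over $0\le r\le a-1$ is attained at the largest representative of each class, i.e.\ at some $r$ with $a-b_k\le r\le a-1$. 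This gives the claimed equality of maxima. I would also remark that, because here the Stable property is exact (valid for \emph{all} $r\in\mathbb{P}$, not just $r$ beyond the bound $\lceil (c-1)b_{k-1}/(b_k-b_{k-1})\rceil b_k$), the quantity $u$ appearing in Lemma \ref{hahaha20} collapses: we may take $u=c-1$, which is why the hypothesis weakens to $a\ge (c-1)b_k$ and $c\ge 2$.

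\medskip

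\noindent\textbf{Main obstacle.} There is no serious obstacle; the only thing to be careful about is the reduction $N_{dr}=N_{dr}(0)$, which requires $a\ge (c-1)b_k$ and is exactly the orderly version of Lemma \ref{hahaha19} asserted in the text just before this lemma. One should double-check that the monotonicity argument does not need $h$ bounded below (it does not: $ha+b_kd>0$ holds for all positive $h,a,d$, unlike in Theorem \ref{hahaha21} where the bound on $h$ is needed to guarantee $O_B(r)$ is the \emph{dominant} term, a strictly stronger statement than what is needed here). So the proof is essentially two lines once Lemma \ref{hahaha9} and the orderly Lemma \ref{hahaha19} are in hand.
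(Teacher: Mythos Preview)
Your proposal is correct and follows essentially the same approach as the paper: invoke Lemma \ref{hahaha19} to reduce to $N_{dr}=N_{dr}(0)$, then use Lemma \ref{hahaha9} to see that $N_{dr}(0)$ is strictly increasing along each residue class modulo $b_k$, so the maximum lands in the top window. Your version is in fact slightly more explicit than the paper's two-line proof, since you write out the difference $N_{d,r+b_k}-N_{dr}=ha+b_kd>0$ rather than just noting that $O_B(sb_k+r)$ increases in $s$.
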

\begin{proof}
By Lemma \ref{hahaha19} and $a\geq (c-1)b_k$, we have $N_{dr}=N_{dr}(0)=O_B(r)\cdot ha+rd$. By Lemma \ref{hahaha9}, for a given $r$, $O_B(sb_k+r)$ is increasing with respect to $s\geq 0$. Therefore when $a\geq (c-1)b_k$, we have $\mathop{\max}\limits_{0\leq r\leq a-1}\{N_{dr}\}=\mathop{\max}\limits_{a-b_k\leq r\leq a-1}\{N_{dr}\}$.
\end{proof}

\begin{thm}\label{hahaha8}
Given an orderly sequence $B=(1,b_2,...,b_k)$, $c\geq 2$. Let $A(a)=(a,ha+dB)=(a, ha+d, ha+db_2, ..., ha+db_k)$, $\gcd(a,d)=1$ and $h\geq \left\lceil \frac{d}{c-1}\right\rceil$, $a,h,d \in \mathbb{P}$. There exist two nonnegative integer sequences $W_B=(w_{j})_{0\le j\le b_k-1}$ and $R_B=(r_{j})_{0\le j \le b_k-1}$, such that for all $a\geq (c-1)b_k$, we have
$$g(A(a))=(w_{j}h-1)a+r_{j}d+(ha+b_kd)\left(\left\lfloor \frac{a}{b_k}\right\rfloor -c+1\right),\ \ \textrm{where } a\equiv j\mod b_k.$$
Moreover, $W_B$ and $R_B$ are both increasing, and $w_{b_k-1}-w_{0}\le 1$.
\end{thm}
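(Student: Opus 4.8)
The plan is to follow the proof of Theorem~\ref{hahaha21} essentially verbatim, replacing the general stable property (Lemma~\ref{hahaha17}) and window reduction (Lemma~\ref{hahaha20}) by their orderly counterparts Lemma~\ref{hahaha9} and Lemma~\ref{hahaha11}; the net effect is that the auxiliary parameter $u$ drops out (equivalently $u=0$), which is exactly what turns $a\geq (u+c-1)b_k$ into $a\geq (c-1)b_k$ and $h\geq \lceil d/(u+c-1)\rceil$ into $h\geq\lceil d/(c-1)\rceil$. First I would write $a=sb_k+j$ with $s\geq c-1$ and $0\leq j\leq b_k-1$. Since $a\geq (c-1)b_k$, Lemma~\ref{hahaha19} gives $N_{dr}=N_{dr}(0)=O_B(r)\,ha+rd$ for every $r$, and Lemma~\ref{hahaha11} reduces the Frobenius computation to the window of residues $a-b_k\leq r\leq a-1$; indeed, for any $r_0\in[0,a-b_k-1]$, Lemma~\ref{hahaha9} gives $N_{d(r_0+b_k)}=(O_B(r_0)+1)ha+(r_0+b_k)d>N_{dr_0}$, so the maximum cannot be attained below the window.

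The second step is the ``domination'' observation. From $h\geq\lceil d/(c-1)\rceil$ and $a\geq (c-1)b_k$ we get $ha\geq b_kd$, hence $ha-(b_k-1)d>0$; so if $r,r'$ both lie in the window and $O_B(r)>O_B(r')$ then $N_{dr}>N_{dr'}$, while if $O_B(r)=O_B(r')$ the larger index wins. Thus $\max_{a-b_k\leq r\leq a-1}N_{dr}$ is located by first taking $\iota=\max_{a-b_k\leq r\leq a-1}O_B(r)$ and then the largest $\widehat r$ in the window with $O_B(\widehat r)=\iota$, giving $g(A(a))=\iota\cdot ha+\widehat r\,d-a$. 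Running this over the base period $(c-1)b_k\leq a\leq cb_k-1$, where $a\bmod b_k$ sweeps all residues, produces for each $j$ integers $r_j\in[a-b_k,a-1]$ and $w_j=O_B(r_j)$ with $g(A(a))=w_jha+r_jd-a=(w_jh-1)a+r_jd$, which matches the asserted formula there since $\lfloor a/b_k\rfloor-c+1=0$.

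For general $s>c-1$, write $a=(s-c+1)b_k+\big((c-1)b_k+j\big)$; iterating Lemma~\ref{hahaha9} shows that on the window $[a-b_k,a-1]$ the function $O_B$ is the base-period window's $O_B$ translated by $s-c+1$, so the optimum is attained at $\overline r=(s-c+1)b_k+r_j$ with $O_B(\overline r)=w_j+(s-c+1)$; substituting into $g(A(a))=O_B(\overline r)ha+\overline r\,d-a$ yields exactly
$$g(A(a))=(w_jh-1)a+r_jd+(ha+b_kd)\left(\left\lfloor\frac{a}{b_k}\right\rfloor-c+1\right).$$
Finally, the monotonicity of $W_B$ and $R_B$ and the bound $w_{b_k-1}-w_0\leq1$ follow exactly as in Theorem~\ref{hahaha21}: using $c\geq 2$ one has $(c-2)b_k\leq r_j<cb_k$, and comparing the window $[(c-2)b_k,(c-1)b_k-1]$ with its shift $[(c-1)b_k,cb_k-1]$, on which $O_B$ is translated by $+1$, one tracks how the last argmax $\widehat r$ of $O_B$ moves as $j$ increases. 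I do not expect a genuine obstacle here: once Lemma~\ref{hahaha9} is in hand the argument is bookkeeping. The only points that need care are keeping the strict inequality $ha-(b_k-1)d>0$, so that ``$O_B$ dominates the index'' holds in every case, and checking the boundary endpoints when $c=2$, so that $(c-2)b_k=0$ is a legitimate left end of the first window.
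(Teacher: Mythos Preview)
Your proposal is correct and is exactly the adaptation the paper has in mind: the paper states Theorem~\ref{hahaha8} without a separate proof, relying on the reader to rerun the argument of Theorem~\ref{hahaha21} with Lemmas~\ref{hahaha9} and~\ref{hahaha11} in place of Lemmas~\ref{hahaha17} and~\ref{hahaha20}, so that $u$ disappears and the bounds become $a\geq(c-1)b_k$ and $h\geq\lceil d/(c-1)\rceil$. Your handling of the domination step, the base period, the shift to general $s$, and the monotonicity of $W_B,R_B$ all mirror the paper's proof of Theorem~\ref{hahaha21}.
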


\begin{exa}\label{hahaha22}
Let $B=(1,5,9)$ and $A(a)=(a, ha+d, ha+5d, ha+9d)$. We have
\begin{align*}
f(t,q)=&(1+tq+t^2q^2+t^3q^3+t^4q^4+tq^5+t^2q^6+t^3q^7+t^4q^8)\cdot \frac{1}{1-tq^9}
\\&=\left(\frac{1-(tq)^5}{1-tq}+\frac{tq^5(1-(tq)^4)}{1-tq}\right)\cdot \frac{1}{1-tq^9}
\\&=\frac{1-(tq)^5+tq^5(1-(tq)^4)}{(1-tq)(1-tq^9)}.
\end{align*}
So $h\geq \left\lceil \frac{d}{3}\right\rceil$. The lower bound is $a\geq 3b_k=27$. Now, we have
$$\begin{aligned}
  g(A(a))=
\left\{
    \begin{array}{lc}
         (6h-1)a+26d+(ha+9d)(\lfloor \frac{a}{9}\rfloor -3) &\ \text{if}\ \ a\equiv 0,1,2,3 \mod 9;\\
         (6h-1)a+30d+(ha+9d)(\lfloor \frac{a}{9}\rfloor -3) & \text{if}\ \ a\equiv 4 \mod 9;\ \ \ \ \ \ \ \ \\
         (7h-1)a+31d+(ha+9d)(\lfloor \frac{a}{9}\rfloor -3) & \text{if}\ \ a\equiv 5,6,7,8 \mod 9.
    \end{array}
\right.
\end{aligned}$$
\end{exa}

\begin{rem}
The bound $a\geq (c-1)b_k$ in Lemma \ref{hahaha11} and Theorem \ref{hahaha8} are friendly. In fact, when $a\geq 24$, the formula in Example \ref{hahaha22} holds.
\end{rem}

\subsection{Some Interesting Formulas}

In \cite[Proposition 4.2]{AnnAdamaszek}, the sequence $B=(1,2,b,b+1,2b)$, $b\geq 4$ is orderly. A. Adamaszek and M. Adamaszek have discussed the sequence $B$'s properties in detail in \cite{AnnAdamaszek}. Note that the partial sequence $(1,2,b,b+1), b\geq 4$ of $B$ is not orderly.

\begin{thm}
Let $A(a)=(a,ha+dB)=(a, ha+d, ha+2d, ha+bd, ha+(b+1)d, ha+2bd)$, $\gcd(a,d)=1$, $b\geq 4$ and $a,h,d,b \in \mathbb{P}$. If $b$ is even and $h\geq \left\lceil \frac{2d}{b-2}\right\rceil$, $a\geq b(b-2)$, then we have
$$\begin{aligned}
  g(A(a))=
\left\{
    \begin{array}{lc}
    \left(\lfloor\frac{a}{2b}\rfloor+\frac{b}{2}-1\right)ha+2bd\lfloor\frac{a}{2b}\rfloor-a-d &\ \text{if}\ \ a\equiv 0,1,2,...,b-3 \mod 2b;\ \ \ \ \\
    \left(\lfloor\frac{a}{2b}\rfloor+\frac{b}{2}-1\right)ha+2bd\lfloor\frac{a}{2b}\rfloor-a+bd-3d &\ \text{if}\ \ a\equiv b-2 \mod 2b;\ \ \ \ \ \ \ \ \ \ \ \ \ \ \ \ \\
    \left(\lfloor\frac{a}{2b}\rfloor+\frac{b}{2}-1\right)ha+2bd\lfloor\frac{a}{2b}\rfloor-a+bd-2d &\ \text{if}\ \ a\equiv b-1 \mod 2b;\ \ \ \ \ \ \ \ \ \ \ \ \ \ \ \ \\
    \left(\lfloor\frac{a}{2b}\rfloor+\frac{b}{2}\right)ha+2bd\lfloor\frac{a}{2b}\rfloor-a+bd-d &\ \text{if}\ \ a\equiv b,b+1,...,2b-2 \mod 2b;\\
    \left(\lfloor\frac{a}{2b}\rfloor+\frac{b}{2}\right)ha+2bd\lfloor\frac{a}{2b}\rfloor-a+2bd-2d &\ \text{if}\ \ a\equiv 2b-1 \mod 2b.\ \ \ \ \ \ \ \ \ \ \ \ \ \
    \end{array}
\right.
\end{aligned}$$
If $b$ is odd and $h\geq \left\lceil \frac{2d}{b-1}\right\rceil$, $a\geq b(b-1)$, then we have
$$\begin{aligned}
  g(A(a))=
\left\{
    \begin{array}{lc}
    \left(\lfloor\frac{a}{2b}\rfloor+\frac{b-1}{2}\right)ha+2bd\lfloor\frac{a}{2b}\rfloor-a-d &\ \text{if}\ \ a\equiv 0,1,2,...,b-2 \mod 2b;\ \ \ \ \\
    \left(\lfloor\frac{a}{2b}\rfloor+\frac{b-1}{2}\right)ha+2bd\lfloor\frac{a}{2b}\rfloor-a+bd-2d &\ \text{if}\ \ a\equiv b-1 \mod 2b;\ \ \ \ \ \ \ \ \ \ \ \ \ \ \ \ \\
    \left(\lfloor\frac{a}{2b}\rfloor+\frac{b-1}{2}\right)ha+2bd\lfloor\frac{a}{2b}\rfloor-a+bd-d &\ \text{if}\ \ a\equiv b,b+1,...,2b-3 \mod 2b;\\
    \left(\lfloor\frac{a}{2b}\rfloor+\frac{b-1}{2}\right)ha+2bd\lfloor\frac{a}{2b}\rfloor-a+2bd-3d &\ \text{if}\ \ a\equiv 2b-2 \mod 2b;\ \ \ \ \ \ \ \ \ \ \ \ \ \ \ \\
    \left(\lfloor\frac{a}{2b}\rfloor+\frac{b-1}{2}\right)ha+2bd\lfloor\frac{a}{2b}\rfloor-a+2bd-2d &\ \text{if}\ \ a\equiv 2b-1 \mod 2b.\ \ \ \ \ \ \ \ \ \ \ \ \ \ \
    \end{array}
\right.
\end{aligned}$$
\end{thm}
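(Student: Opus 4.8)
The strategy is to specialize the orderly-sequence machinery of Section~\ref{aspecial}. Since $B=(1,2,b,b+1,2b)$ is orderly by \cite[Proposition~4.2]{AnnAdamaszek}, Theorem~\ref{hahaha8} applies with $b_k=2b$, and every value $O_B(M)$ we need equals the greedy value $G_B(M)$. The proof then breaks into three parts, which I would carry out in parallel for even and odd $b$: (i) compute the constant $c=\max\{O_B(r):0\le r\le 2b-1\}$; (ii) determine the two sequences $W_B=(w_j)_{0\le j\le 2b-1}$ and $R_B=(r_j)_{0\le j\le 2b-1}$ from the greedy algorithm; (iii) substitute into the formula of Theorem~\ref{hahaha8} and simplify.

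For (i), I would split $\{0,1,\dots,2b-1\}$ into $\{0\}$, $\{1,\dots,b-1\}$, $\{b\}$, $\{b+1\}$, $\{b+2,\dots,2b-1\}$. On $\{1,\dots,b-1\}$ greedy uses only $2$'s and at most one $1$, so $O_B(r)=\lceil r/2\rceil$; $O_B(b)=O_B(b+1)=1$; on $\{b+2,\dots,2b-1\}$ greedy takes one $b+1$ and then continues as on $\{1,\dots,b-1\}$, so $O_B(r)=1+\lceil(r-b-1)/2\rceil$. Maximizing gives $c=b/2$ for even $b$ and $c=(b+1)/2$ for odd $b$ (so $c\ge2$ throughout, since $b\ge4$). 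Substituting these values of $c$ into the hypotheses $h\ge\lceil d/(c-1)\rceil$ and $a\ge(c-1)b_k$ of Theorem~\ref{hahaha8} reproduces exactly the stated conditions.

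For (ii), write $g(i):=O_B(i)$ for $0\le i\le 2b-1$, as tabulated above; Lemma~\ref{hahaha9} gives $O_B(2bm+i)=m+g(i)$ for all $m\ge0$. Following the proof of Theorem~\ref{hahaha21}/\ref{hahaha8}, for $a=(c-1)2b+j$ with $0\le j\le 2b-1$ I would examine $O_B$ on the length-$2b$ window $[(c-2)2b+j,\ (c-1)2b+j-1]$, which lies in two consecutive periods; then $w_j$ is the maximum of $O_B$ on the window and $r_j$ the largest argument attaining it. The needed input is where $g$ takes its top two values: $g=c$ exactly at $i=b-1,2b-2,2b-1$ for even $b$ and only at $i=2b-1$ for odd $b$, while $g=c-1$ at two indices near $b-1$ and two near $2b-1$. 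Sliding the window over $j=0,\dots,2b-1$ and tracking left-to-right $t$-maxima as in Example~\ref{examplenonordely} yields, for even $b$: $w_j=b-2$ for $0\le j\le b-1$ and $w_j=b-1$ for $b\le j\le 2b-1$, with $r_j=b(b-2)-1$ for $0\le j\le b-3$, $r_{b-2}=b^2-b-3$, $r_{b-1}=b^2-b-2$, $r_j=b^2-b-1$ for $b\le j\le 2b-2$, $r_{2b-1}=b^2-2$; and for odd $b$: $w_j=b-1$ for all $j$, with $r_j=b(b-1)-1$ for $0\le j\le b-2$, $r_{b-1}=b^2-2$, $r_j=b^2-1$ for $b\le j\le 2b-3$, $r_{2b-2}=b^2+b-3$, $r_{2b-1}=b^2+b-2$. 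One checks directly that $W_B$ and $R_B$ are increasing with $w_{2b-1}-w_0\le1$, confirming Theorem~\ref{hahaha8}.

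Finally for (iii), expanding $g(A(a))=(w_jh-1)a+r_jd+(ha+2bd)(\lfloor a/(2b)\rfloor-c+1)$ gives $g(A(a))=(\lfloor a/(2b)\rfloor+w_j-c+1)ha+2bd\lfloor a/(2b)\rfloor-a+(r_j-(c-1)2b)d$, and inserting the values of $c,w_j,r_j$ from part (ii), grouped by residue, yields the two displayed piecewise formulas. The main obstacle is part (ii): one must enumerate carefully every index in a single period at which $g$ equals $c$ or $c-1$ (the two parities behave differently) and then show which of these stays the \emph{largest} window-maximizer for each $j$ --- the exceptional residues $b-2,b-1,2b-2,2b-1$ (even $b$) and $b-1,2b-2,2b-1$ (odd $b$) are exactly those where a near-maximum entering the upper half of the window overtakes the last maximum in the lower half. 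Everything else is routine arithmetic, and the formula can be spot-checked on small cases such as $b=4$, $b=5$.
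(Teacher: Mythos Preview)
Your proposal is correct and follows essentially the same route as the paper. Both arguments invoke the orderliness of $B=(1,2,b,b+1,2b)$ from \cite{AnnAdamaszek}, compute $c$ (the paper does this by writing out the first $2b$ terms of $f(t,q)$, you do it by evaluating the greedy cost directly---the same thing since $O_B=G_B$), read off the sequences $W_B,R_B$ period by period, and substitute into Theorem~\ref{hahaha8}; your values of $c$, $w_j$, $r_j$ agree with the paper's in both parities.
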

\begin{proof}
If $b$ is even, we have
\begin{align*}
f(t,q)=&\Big(1+tq+tq^2+t^2q^3+t^2q^4+\cdots+t^{\frac{b-2}{2}}q^{b-3}
+t^{\frac{b-2}{2}}q^{b-2}+t^{\frac{b}{2}}q^{b-1}
\\&+tq^b+tq^{b+1}+t^2q^{b+2}+t^2q^{b+3}+\cdots t^{\frac{b}{2}}q^{2b-2}+t^{\frac{b}{2}}q^{2b-1}\Big)\cdot \frac{1}{1-tq^{2b}}.
\end{align*}
So we have $c=\frac{b}{2}$ and $a\geq (c-1)2b=b(b-2)$. Suppose $a\equiv j \mod 2b$, $0\leq j\leq 2b-1$. In $f(t,q)$, the coefficient before in $q^{b(b-2)-1}=q^{(\frac{b}{2}-2)\cdot 2b+2b-1}$ is $t^{\frac{b}{2}-2+\frac{b}{2}}=t^{b-2}$. Therefore we have $w_{j}=b-2$ and $r_j=b^2-2b-1$ for $0\leq j\leq b-3$. Similarly, the coefficient before in $q^{(\frac{b}{2}-1)\cdot 2b+b-3}$ is $t^{b-2}$ in $f(t,q)$. So we have $w_{b-2}=b-2$ and $r_{b-2}=b^2-b-3$. Again because there is $t^{b-2}q^{(\frac{b}{2}-1)2b+b-2}$ in $f(t,q)$, we have $w_{b-1}=b-2$ and $r_{b-1}=b^2-b-2$. For $b\leq j\leq 2b-2$, there is $t^{b-1}q^{(\frac{b}{2}-1)2b+b-1}$ in $f(t,q)$. We have $w_{j}=b-1$ and $r_{j}=b^2-b-1$ for $b\leq j\leq 2b-2$. Finally, because there is $t^{b-1}q^{(\frac{b}{2}-1)2b+2b-1}$ in $f(t,q)$, we have $w_{2b-1}=b-1$ and $r_{2b-1}=b^2-2$. So we obtain the sequences $W_B$ and $R_B$ in Theorem \ref{hahaha8}. By the formula in Theorem \ref{hahaha8}, we obtain the Frobenius formula $g(A(a))$ in which $b$ is even.

If $b$ is odd, we have
\begin{align*}
f(t,q)=&\Big(1+tq+tq^2+t^2q^3+t^2q^4+\cdots+t^{\frac{b-1}{2}}q^{b-2}
+t^{\frac{b-1}{2}}q^{b-1}
\\&+tq^b+tq^{b+1}+t^2q^{b+2}+t^2q^{b+3}+\cdots t^{\frac{b-1}{2}}q^{2b-3}+t^{\frac{b-1}{2}}q^{2b-2}+t^{\frac{b+1}{2}q^{2b-1}}\Big)\cdot \frac{1}{1-tq^{2b}}.
\end{align*}
The calculation process of Frobenius formula $g(A(a))$ is similar to that $b$ is even. This completes the proof.
\end{proof}

Obviously, the sequence $(1,b)$, $b\geq 2$ is orderly. By the \emph{One-Point Theorem} \cite{LJCowen,TCHu,MJMagazine}, we know that $B=(1,b,2b-1), b\geq 3$ is an orderly sequence.
\begin{thm}
Let $A(a)=(a,ha+dB)=(a, ha+d, ha+bd, ha+(2b-1)d)$, $\gcd(a,d)=1$, $b\geq 3$ and $a,h,d,b \in \mathbb{P}$. Suppose $h\geq \left\lceil \frac{d}{b-2}\right\rceil$, $a\geq 2b^2-5b+2$, then we have
\begin{small}
$$\begin{aligned}
  g(A(a))=
\left\{
    \begin{array}{lc}
    \left(\lfloor\frac{a}{2b-1}\rfloor+b-2\right)ha+(2b-1)d\lfloor\frac{a}{2b-1}\rfloor-a-d &\ \text{if}\ \ a\equiv 0,1,...,b-2 \mod 2b-1;\ \ \ \\
    \left(\lfloor\frac{a}{2b-1}\rfloor+b-2\right)ha+(2b-1)d\lfloor\frac{a}{2b-1}\rfloor-a+(b-2)d &\ \text{if}\ \ a\equiv b-1 \mod 2b-1;\ \ \ \ \ \ \ \ \ \ \ \ \\
    \left(\lfloor\frac{a}{2b-1}\rfloor+b-1\right)ha+(2b-1)d\lfloor\frac{a}{2b-1}\rfloor-a+(b-1)d &\ \text{if}\ \ a\equiv b,...,2b-2 \mod 2b-1.\ \ \ \
    \end{array}
\right.
\end{aligned}$$
\end{small}
\end{thm}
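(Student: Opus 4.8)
The plan is to follow the generating-function route used in the preceding proofs of this section: compute $f(t,q)$, read off the sequences $W_B=(w_j)$ and $R_B=(r_j)$, and substitute into Theorem~\ref{hahaha8}. Since $B=(1,b,2b-1)$ is orderly, $O_B=G_B$; for $0\le\rho\le 2b-2$ the greedy algorithm cannot use $2b-1$, so $O_B(\rho)=\rho$ when $0\le\rho\le b-1$ and $O_B(\rho)=\rho-b+1$ when $b\le\rho\le 2b-2$. Hence $c=\max\{O_B(\rho):0\le\rho\le 2b-2\}=b-1$ (attained at $\rho=b-1$ and $\rho=2b-2$), which makes the hypotheses $h\ge\lceil d/(c-1)\rceil=\lceil d/(b-2)\rceil$ and $a\ge(c-1)b_k=(b-2)(2b-1)=2b^2-5b+2$ coincide with those in the statement, and by Lemma~\ref{hahaha9},
\[
f(t,q)=\Bigl(\sum_{\rho=0}^{b-1}(tq)^\rho+tq^b\sum_{i=0}^{b-2}(tq)^i\Bigr)\frac{1}{1-tq^{2b-1}}=\frac{1-(tq)^b+tq^b\bigl(1-(tq)^{b-1}\bigr)}{(1-tq)(1-tq^{2b-1})}.
\]

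Next, for each residue $j$ with $0\le j\le 2b-2$ I would, in the base range $(b-2)(2b-1)\le a\le(b-1)(2b-1)-1$, determine $\iota_j=\max\{O_B(r):a-(2b-1)\le r\le a-1\}$ and the largest $r_j$ in that window with $O_B(r_j)=\iota_j$, so that $w_j=\iota_j$. Writing a window element as $q'(2b-1)+\rho$ with $0\le\rho\le 2b-2$, the window $[a-(2b-1),a-1]$ for $a=(b-2)(2b-1)+j$ consists of the residues $\rho\in\{j,\dots,2b-2\}$ at level $q'=b-3$ (the lower block) together with the residues $\rho\in\{0,\dots,j-1\}$ at level $q'=b-2$ (the upper block, which carries the larger $r$-values). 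Three cases occur: for $0\le j\le b-2$ the maximum $2b-4$ is attained only at $\rho=2b-2$ in the lower block, giving $w_j=2b-4$ and $r_j=2b^2-5b+1$; for $j=b-1$ the value $2b-4$ is attained at $\rho\in\{b-1,2b-2\}$ in the lower block and at $\rho=b-2$ in the upper block, and the largest of these $r$'s is the one at $\rho=b-2$, giving $w_{b-1}=2b-4$ and $r_{b-1}=2b(b-2)$; for $b\le j\le 2b-2$ the residue $\rho=b-1$ has now entered the upper block and raises the maximum to $2b-3$, attained only there, giving $w_j=2b-3$ and $r_j=2b^2-4b+1$. (The boundary case $j=0$, where the window is exactly one period and has no upper block, is checked separately and yields the same values as $1\le j\le b-2$.) One then verifies that $W_B$ and $R_B$ are increasing and $w_{2b-2}-w_0=1$, as Theorem~\ref{hahaha8} requires. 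Substituting these $W_B,R_B$ into the formula of Theorem~\ref{hahaha8}, using $\lfloor a/(2b-1)\rfloor-c+1=\lfloor a/(2b-1)\rfloor-b+2$, and simplifying collapses the three cases into the three branches of the claimed formula.

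The step I expect to be the main obstacle is the window analysis at the two transition residues: the tie at $j=b-1$, where three residues attain the maximum and one must take care to select the one with the largest $r$, and the jump from $j=b-1$ to $j=b$, where $\rho=b-1$ migrates from the lower block into the upper block of the window and thereby bumps the optimal coefficient from $2b-4$ to $2b-3$; pinning down the edge case $j=0$ also needs its own short check. Once the correct pairs $(w_j,r_j)$ are in hand, the remainder is the routine algebraic simplification already illustrated in Example~\ref{hahaha22} and in the preceding proofs.
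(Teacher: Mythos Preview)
Your proposal is correct and follows the same approach as the paper: both compute $f(t,q)$ from the orderly structure of $B=(1,b,2b-1)$, read off $c=b-1$, and then feed the resulting $W_B,R_B$ into Theorem~\ref{hahaha8}. The paper's proof simply writes out $f(t,q)$ and declares the remaining steps routine, whereas you have carried out the window analysis and the algebraic verification of the three branches in full detail; the arguments agree.
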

\begin{proof}
By the sequence $b=(1,b,2b-1)$, $b\geq 3$ is orderly, we have
\begin{align*}
f(t,q)=&\Big(1+tq+t^2q^2+t^3q^3+\cdots+t^{b-3}q^{b-3}+t^{b-2}q^{b-2}+t^{b-1}q^{b-1}
\\&+tq^b+t^2q^{b+1}+t^3q^{b+2}+\cdots t^{b-3}q^{2b-4}+t^{b-2}q^{2b-3}+t^{b-1}q^{2b-2}\Big)\cdot \frac{1}{1-tq^{2b-1}}.
\end{align*}
By Theorem \ref{hahaha8}, we have $c=b-1$, $h\geq \lceil\frac{d}{b-2}\rceil$ and $a\geq 2b^2-5b+2$. The proof of remaining parts are routine.
\end{proof}

In \cite{E. S. Selmer}, E. S. Selmer studies the sequence $A=(a, a+1, a+2, ..., a+k, a+K)$ with $K>k$. In \cite{Rodseth}, \"O. J. R\"odseth further obtains $g(A)$ for $A=(a,a+d,..., a+kd, a+Kd)$ with an additional term.  In \cite{T.Komatsu20220323}, there are also some results about \emph{Sylvester sum} for$A=(a,a+d,..., a+kd, a+Kd)$. Now, we obtain the following results by Theorem \ref{hahaha8}.
\begin{thm}
Let $A(a)=(a,ha+dB)=(a, ha+d, ha+2d,..., ha+kd, ha+Kd)$, $\gcd(a,d)=1$, $K-1>k\geq 2$ and $a,h,d,k,K \in \mathbb{P}$. Let $K=sk+t$, $0\leq t\leq k-1$ and $s\geq 1$. For $2\leq t\leq k-1$, if $h\geq \left\lceil \frac{d}{s}\right\rceil$, $a\geq sK$, then we have
\begin{small}
$$\begin{aligned}
  g(A(a))=
\left\{
    \begin{array}{lc}
    \left(\lfloor\frac{a}{K}\rfloor+s\right)ha+Kd\lfloor\frac{a}{K}\rfloor-a-d &\ \text{if}\ \ a\equiv j\in\{ 0,1,...,(s-1)k+1\} \mod K;\ \ \ \ \ \ \\
    \left(\lfloor\frac{a}{K}\rfloor+s\right)ha+Kd\lfloor\frac{a}{K}\rfloor-a+(j-1)d &\ \text{if}\ \ a\equiv j\in\{ (s-1)k+2,...,sk+1\} \mod K;\ \ \ \\
    \left(\lfloor\frac{a}{K}\rfloor+s+1\right)ha+Kd\lfloor\frac{a}{K}\rfloor-a+(j-1)d &\ \text{if}\ \ a\equiv j\in\{ sk+2,...,K-1\} \mod K.\ \ \ \ \ \ \ \ \ \ \
    \end{array}
\right.
\end{aligned}$$
\end{small}
For $t=0,1$, if $h\geq \left\lceil \frac{d}{s-1}\right\rceil$, $a\geq (s-1)K$, then we have
\begin{small}
$$\begin{aligned}
  g(A(a))=
\left\{
    \begin{array}{lc}
    \left(\lfloor\frac{a}{K}\rfloor+s-1\right)ha+Kd\lfloor\frac{a}{K}\rfloor-a-d &\ \text{if}\ \ a\equiv j\in\{ 0,1,...,(s-2)k+1\} \mod K;\ \ \ \ \ \ \ \ \ \ \ \ \ \ \\
    \left(\lfloor\frac{a}{K}\rfloor+s-1\right)ha+Kd\lfloor\frac{a}{K}\rfloor-a+(j-1)d &\ \text{if}\ \ a\equiv j\in\{(s-2)k+2,...,(s-1)k+1\} \mod K;\ \ \ \ \\
    \left(\lfloor\frac{a}{K}\rfloor+s\right)ha+Kd\lfloor\frac{a}{K}\rfloor-a+(j-1)d &\ \text{if}\ \ a\equiv j\in\{(s-1)k+2,...,K-1\} \mod K.\ \ \ \ \ \ \ \ \ \ \
    \end{array}
\right.
\end{aligned}$$
\end{small}
\end{thm}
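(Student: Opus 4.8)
The plan is to recognize $B=(1,2,\dots,k,K)$ as an orderly sequence, extract its basic data, and apply Theorem~\ref{hahaha8}. First I would verify orderliness. For any $M$, the greedy solution for $B$ uses $\lfloor M/K\rfloor$ copies of $K$ and then represents the residual $M\bmod K<K$ greedily with $1,2,\dots,k$; the latter is optimal because every part is at most $k$, so $G_B(M)=\lfloor M/K\rfloor+\lceil(M\bmod K)/k\rceil$. To see this is globally optimal, note that a representation using $q'$ copies of $K$ costs at least $q'+\lceil(M-q'K)/k\rceil$, and since $\lceil(x-K)/k\rceil\le\lceil x/k\rceil-\lfloor K/k\rfloor\le\lceil x/k\rceil-1$ (using $K>k$), this lower bound is non-increasing in $q'$ and hence minimized at $q'=\lfloor M/K\rfloor$. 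Thus $O_B=G_B$; in particular $O_B(r)=\lceil r/k\rceil$ for $0\le r\le K-1$, and then Lemma~\ref{hahaha9} gives
$$f(t,q)=\frac{1}{1-tq^{K}}\sum_{r=0}^{K-1}t^{\lceil r/k\rceil}q^{r}.$$

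From this one reads off $c=\max_{0\le r\le K-1}O_B(r)=\lceil(K-1)/k\rceil$, so with $K=sk+t$ we get $c=s+1$ when $2\le t\le k-1$ and $c=s$ when $t\in\{0,1\}$; the hypothesis $K-1>k$ forces $s\ge2$ in the latter case, so $c\ge2$ throughout. Theorem~\ref{hahaha8} then applies: its bounds $a\ge(c-1)K$ and $h\ge\lceil d/(c-1)\rceil$ are exactly the hypotheses $a\ge sK$, $h\ge\lceil d/s\rceil$ (resp. $a\ge(s-1)K$, $h\ge\lceil d/(s-1)\rceil$) of the present theorem, and its output $g(A(a))=(w_jh-1)a+r_jd+(ha+Kd)(\lfloor a/K\rfloor-c+1)$ is the common skeleton of both claimed piecewise formulas once $\lfloor a/K\rfloor-c+1$ is written as $\lfloor a/K\rfloor-s$ (resp. $\lfloor a/K\rfloor-s+1$).

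It remains to compute $W_B=(w_j)$ and $R_B=(r_j)$ via the procedure in the proof of Theorem~\ref{hahaha8}: for $a\equiv j\bmod K$ with $a$ in the base period, find the largest maximizer $\widehat r$ of $O_B$ over a window of length $K$. Writing the elements of that window as $(c-2)K+j'$ with $j\le j'\le K-1$, or as $(c-1)K+j''$ with $0\le j''\le j-1$, and using $O_B(mK+x)=m+\lceil x/k\rceil$ for $0\le x<K$, the maximum over the window equals $(c-2)+\max\{c,\,1+\lceil(j-1)/k\rceil\}$. A case split on which term dominates — and, in the tie, on the fact that the $j''$-branch yields the larger $r$ — produces the breakpoints $j=(c-2)k+2$ and $j=(c-1)k+2$ (i.e. $(s-1)k+2$ and $sk+2$ when $c=s+1$, and $(s-2)k+2$ and $(s-1)k+2$ when $c=s$), together with $r_j\in\{(c-1)K-1,\ (c-1)K+j-1\}$ and $w_j\in\{2c-2,\ 2c-1\}$. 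Substituting these into the formula of Theorem~\ref{hahaha8} and simplifying reproduces the stated five-piece (resp. three-piece) formula; in particular the constant $d$-offsets ($-d$ on the first block, $+(j-1)d$ on the others) are exactly the contributions $r_jd-(c-1)Kd$.

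The main obstacle I anticipate is the bookkeeping in this last step, carried out uniformly in $k,s,t$: handling the ceiling functions near both ends of the window, verifying that each residue block is non-empty or degenerates correctly (the topmost block collapses when $t=2$, and similarly for $k=2$ when $t\in\{0,1\}$), and checking on each block that $O_B(\widehat r)$ indeed takes the asserted value. None of this is deep, but it is the part that requires care.
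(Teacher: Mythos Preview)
Your approach is correct and essentially the same as the paper's: compute $f(t,q)$ for $B=(1,2,\dots,k,K)$, read off $c=\lceil(K-1)/k\rceil$ (so $c=s+1$ or $c=s$ according to $t$), and apply Theorem~\ref{hahaha8}; the paper then declares the extraction of $W_B,R_B$ ``routine'' while you actually sketch it. One small point in your favor: you supply a proof that $B$ is orderly, which the paper silently assumes by placing the result in the orderly-sequence section and writing $f(t,q)$ in the factored form $\big(\sum_{r<K}t^{\lceil r/k\rceil}q^r\big)/(1-tq^K)$ without justification. A minor slip: both displayed formulas in the statement have three pieces, not five.
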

\begin{proof}
If $2\leq t\leq k-1$, we have
\begin{align*}
f(t,q)=&\Big(1+\big(tq+tq^2+\cdots+tq^k\big)+\big(t^{2}q^{k+1}+t^{2}q^{k+2}\cdots+t^{2}q^{2k}\big)
+\cdots +\big(t^sq^{(s-1)k+1}
\\&+t^sq^{(s-1)k+2}+\cdots t^{s}q^{sk}\big)+\big(t^{s+1}q^{sk+1}+t^{s+1}q^{sk+2}+\cdots +t^{s+1}q^{sk+t-1}\big)\Big)\cdot \frac{1}{1-tq^{K-1}}.
\end{align*}
By Theorem \ref{hahaha8}, we have $c=s+1$, $h\geq \lceil\frac{d}{s}\rceil$ and $a\geq sK$.

If $t=0,1$, by $K>k+1$, we have $s\geq 2$. Therefore we have
\begin{align*}
f(t,q)=&\Big(1+\big(tq+tq^2+\cdots+tq^k\big)+\big(t^{2}q^{k+1}+t^{2}q^{k+2}\cdots+t^{2}q^{2k}\big)
+\cdots +\big(t^{s-1}q^{(s-2)k+1}
\\&+t^{s-1}q^{(s-2)k+2}+\cdots t^{s-1}q^{(s-1)k}\big)+\big(t^{s}q^{(s-1)k+1}+\cdots +t^{s}q^{sk+t-1}\big)\Big)\cdot \frac{1}{1-tq^{K-1}}.
\end{align*}
By Theorem \ref{hahaha8}, we have $c=s$, $h\geq \lceil\frac{d}{s-1}\rceil$ and $a\geq (s-1)K$.

The proof of remaining parts are routine.
\end{proof}

\subsection{The case $c=1$}
Now, suppose $c=1$. We know that $B=(1,2,...,k)$, $k\geq 2$. This is an orderly sequence. Lemma \ref{hahaha1}, Lemma \ref{hahaha19} and Lemma \ref{hahaha9} is still correct. We can force the assumption that $c=2$. Then Lemma \ref{hahaha11} and Theorem \ref{hahaha8} is also holds. Moreover, there is no restriction between $h$ and $d$. Therefore, we can obtain the following corollary.

\begin{cor}[\cite{E. S. Selmer}]\label{cor1234k}
Let $A(a)=(a,ha+dB)=(a, ha+d, ha+2d, ..., ha+kd)$, $\gcd(a,d)=1$, $a\geq 2$, $a,h,d,k \in \mathbb{P}$. Then we have
\begin{equation}\label{corcis1}
g(A(a))=ha\left(\left\lfloor\frac{a-2}{k}\right\rfloor+1\right)+(d-1)(a-1)-1.
\end{equation}
\end{cor}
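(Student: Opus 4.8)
The plan is to derive Corollary~\ref{cor1234k} as a direct specialization of Theorem~\ref{hahaha8}, exactly as the surrounding text suggests. For $B=(1,2,\dots,k)$ we have $b_k=k$ and $c=\max\{O_B(r)\mid 0\le r\le k-1\}=1$, since every residue $0\le r\le k-1$ is itself a member of $B$ (or is $0$). As remarked in the paper, we artificially set $c=2$; then Lemma~\ref{hahaha1}, Lemma~\ref{hahaha19}, Lemma~\ref{hahaha9}, Lemma~\ref{hahaha11} and Theorem~\ref{hahaha8} all remain valid, and the bound becomes $a\ge(c-1)b_k=k$. However we want the sharper bound $a\ge 2$, so the first step is to re-examine the argument of Lemma~\ref{hahaha11}/Theorem~\ref{hahaha8} directly in this case: because $O_B(r)=\lceil r/k\rceil$ exactly (greedy is optimal and uses $\lfloor r/k\rfloor$ copies of $k$ plus at most one smaller part), the quantity $N_{dr}=O_B(r)\cdot ha+rd$ can be computed in closed form for every $r$, with no stability threshold needed beyond $a\ge 2$ (needed only so that $A(a)$ has at least two distinct generators and $\gcd$ conditions make sense).

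Next I would carry out the Brauer--Shockley computation explicitly. By Theorem~1 (Brauer--Shockley) and Lemma~\ref{0202} with $N_{dr}=N_{dr}(0)=O_B(r)\cdot ha+rd$ (valid once $a\ge(c-1)b_k$, and directly checkable for small $a$), we get
\begin{align*}
g(A(a))=\max_{0\le r\le a-1}\bigl\{O_B(r)\cdot ha+rd\bigr\}-a
=\max_{0\le r\le a-1}\Bigl\{\Bigl\lceil\tfrac{r}{k}\Bigr\rceil ha+rd\Bigr\}-a.
\end{align*}
Since both $\lceil r/k\rceil$ and $rd$ are nondecreasing in $r$, the maximum is attained at $r=a-1$, giving $\lceil(a-1)/k\rceil ha+(a-1)d-a$. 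Now I would rewrite $\lceil(a-1)/k\rceil=\lfloor(a-2)/k\rfloor+1$ (valid for $a\ge2$, using $\lceil m/k\rceil=\lfloor(m-1)/k\rfloor+1$ for $m\ge1$ with $m=a-1$), obtaining
\begin{align*}
g(A(a))=ha\Bigl(\Bigl\lfloor\tfrac{a-2}{k}\Bigr\rfloor+1\Bigr)+(a-1)d-a
=ha\Bigl(\Bigl\lfloor\tfrac{a-2}{k}\Bigr\rfloor+1\Bigr)+(d-1)(a-1)-1,
\end{align*}
which is precisely \eqref{corcis1}. The only care needed is the boundary: for $2\le a\le k$ one has $\lfloor(a-2)/k\rfloor=0$, and the formula reduces to $ha+(d-1)(a-1)-1$, which one checks equals $\max\{0,ha,2ha,\dots\}$-style Ap\'ery computation directly — or simply note the argument above never used $a>k$.

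The main obstacle, such as it is, is bookkeeping rather than mathematics: one must justify that the stability lemmas survive the degenerate $c=1$ case and that the threshold can be pushed all the way down to $a\ge2$. I would handle this by not invoking the threshold at all — instead observing that the identity $N_{dr}=O_B(r)ha+rd$ with $O_B(r)=\lceil r/k\rceil$ holds unconditionally for $B=(1,\dots,k)$ (this is just optimality of greedy, i.e. orderliness, which needs no lower bound on $a$), so the maximization over $r\in\{0,\dots,a-1\}$ and the resulting closed form are valid for every $a\ge2$. A secondary point worth a line is confirming $\gcd(A(a))=1$: since $\gcd(a,d)=1$ the generators $a$ and $ha+d$ already generate a subgroup containing $\gcd(a,ha+d)=\gcd(a,d)=1$, so the Frobenius number is well defined. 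With these remarks in place the corollary follows, recovering Selmer's classical formula.
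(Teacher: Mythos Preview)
Your argument is correct and in fact cleaner than the paper's. The paper specializes Theorem~\ref{hahaha8} by forcing $c=2$, computes the sequences $W_B=(1,1,2,\dots,2)$ and $R_B=(k-1,k,\dots,2k-2)$, obtains a two-branch piecewise formula in $j=a\bmod k$, extends it by hand to $2\le a\le k-1$, and finally asserts equivalence with \eqref{corcis1}. You bypass all of this: once you know $O_B(r)=\lceil r/k\rceil$ and $N_{dr}=N_{dr}(0)$, the maximization over $r$ is trivially at $r=a-1$ and the closed form drops out immediately, with no case split and no separate small-$a$ check.

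One small point of wording: your justification that ``$N_{dr}=O_B(r)ha+rd$ holds unconditionally \dots\ this is just optimality of greedy'' conflates two separate facts. Orderliness gives $O_B(r)=\lceil r/k\rceil$; what makes $N_{dr}=N_{dr}(0)$ is the monotonicity $N_{dr}(m+1)-N_{dr}(m)=(\lceil((m+1)a+r)/k\rceil-\lceil(ma+r)/k\rceil)ha+ad\ge ad>0$, which is Lemma~\ref{hahaha19} with $c=1$ (bound $a\ge 0$). You essentially have this earlier when you cite Lemma~\ref{0202} and Lemma~\ref{hahaha19}, so just keep that reasoning and drop the appeal to orderliness at that step.
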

\begin{proof}
We first suppose $a\geq k$, i.e., we force that $c=2$. By Theorem \ref{hahaha8}, we have
$$\begin{aligned}
  w_j=
\left\{
    \begin{array}{lc}
         1 &\ \text{if}\ \ j= 0,1;\ \ \ \ \ \ \ \ \ \\
         2 &\ \text{if}\ \ 2\leq j\leq k-1.
    \end{array}
\right.
\end{aligned}$$
and
$$r_j=k+j-1,\ \ 0\leq j\leq k-1.$$
Therefore we have
$$\begin{aligned}
  g(A(a))=
\left\{
    \begin{array}{lc}
         (h-1)a+(k+j-1)d+(ha+kd)(\lfloor \frac{a}{k}\rfloor -1) &\ \text{if}\ \  j=0,1\ \ \ \ \ \ \ \ ;\\
         (2h-1)a+(k+j-1)d+(ha+kd)(\lfloor \frac{a}{k}\rfloor -1) &\ \text{if}\ \ 2\leq j\leq k-1;
    \end{array}
\right.
\end{aligned}$$
where $a\equiv j \mod k$. Furthermore, the above formula also holds for $2\leq a\leq k-1$. It is easy to verify that the above formula is equivalent to Equation \eqref{corcis1}.
\end{proof}

For Corollary \ref{cor1234k},
A. Brauer \cite{A. Brauer} first computed $g(A(a))=g(a,a+1,...,a+k)$. J. B. Roberts \cite{Roberts1} extended this result to $A=(a,a+d,...,a+kd)$. E. S. Selmer \cite{E. S. Selmer} further generalized to the case $A=(a, ha+d, ha+2d, ..., ha+kd)$ (also see \cite{A. Tripathi2}).

\section{The Case $b_1\neq 1$}

We believe the ideas developed here also works for general $B=(b_1,b_2,\dots, b_k)$, i.e., without the restriction $b_1=1$. Indeed, we may assume $\gcd(b_1,b_2,\dots,b_k)=1$ because of the $d$. We claim that $O_B(M)$ also have the ``Stable" property. The proof is similar but much more complicated.
We only illustrate the idea by an example.

Let us consider the ``Stable" property of $B=(4,8,15,17)$. Similar to Example \ref{b1613exm}, we divide by $f(t,q)=\sum_{i=0}^{16} f^i$, where $f^i$ extract all terms corresponding to $q^{17s+i}, s\geq 0$. Again we bold faced all terms not implied by the stable property. We added some underlined $0\cdot q^*$ terms for clarity. For instance, $0\cdot q^{18}$ in $f^1$ means that $18$ is not representable by $B$.
\begin{small}
\begin{align*}
f^0&=\mathbf{1}+tq^{17}+{t}^{2}{q}^{34}+{t}^{3}{q}^{51}+{t}^{4}{q}^{68}+{t}^{5}{q}^{85}
+t^{6}q^{102}+t^7q^{119}+t^{8}q^{136}+t^9q^{153}+\cdots
\\f^1&=\mathbf{\underline{0\cdot q+0\cdot q^{18}}+{t}^{4}{q}^{35}+{t}^{5}{q}^{52}+{t}^{6}{q}^{69}
+{t}^{7}{q}^{86}+t^{8}q^{103}+t^8q^{120}}+t^9q^{137}+t^{10}q^{154}+\cdots
\\f^2&=\mathbf{\underline{0\cdot q^2}+t^2q^{19}}+{t}^{3}{q}^{36}+t^4q^{53}+{t}^{5}{q}^{70}
+{t}^{6}{q}^{87}+{t}^{7}{q}^{104}+t^{8}q^{121}+t^9q^{138}+t^{10}q^{155}+\cdots
\\f^3&=\mathbf{\underline{0\cdot q^3}+t^3q^{20}+{t}^{4}{q}^{37}+t^5q^{54}+{t}^{6}{q}^{71}
+{t}^{7}{q}^{88}+{t}^{7}{q}^{105}}+t^8q^{122}+t^{9}q^{139}+t^{10}q^{156}+\cdots
\\f^4&=\mathbf{tq^4}+t^2q^{21}+{t}^{3}{q}^{38}+{t}^{4}{q}^{55}+{t}^{5}{q}^{72}
+{t}^{6}{q}^{89}+t^7q^{106}+t^{8}q^{123}+t^{9}q^{140}+t^{10}q^{157}+\cdots
\\f^5&=\mathbf{\underline{0\cdot q^5+0\cdot q^{22}}+{t}^{4}{q}^{39}+{t}^{5}{q}^{56}+{t}^{6}{q}^{73}
+t^6q^{90}}+{t}^{7}{q}^{107}+t^8q^{124}+t^9q^{141}+t^{10}q^{158}+\cdots
\\f^6&=\mathbf{\underline{0\cdot q^6}+t^2q^{23}}+{t}^{3}{q}^{40}+{t}^{4}{q}^{57}+{t}^{5}{q}^{74}
+{t}^{6}{q}^{91}+t^{7}q^{108}+t^{8}q^{125}+t^{9}q^{142}+t^{10}q^{159}+\cdots
\\f^7&=\mathbf{\underline{0\cdot q^7}+t^3q^{24}+{t}^{4}{q}^{41}+{t}^{5}{q}^{58}+{t}^{5}{q}^{75}}
+{t}^{6}{q}^{92}+t^7q^{109}+t^8q^{126}+t^{9}q^{143}+t^{10}q^{160}+\cdots
\\f^8&=\mathbf{tq^8}+t^2q^{25}+{t}^{3}{q}^{42}+{t}^{4}{q}^{59}+{t}^{5}{q}^{76}
+{t}^{6}{q}^{93}+t^7q^{110}+t^8q^{127}+t^9q^{144}+t^{10}q^{161}+\cdots
\\f^9&=\mathbf{\underline{0\cdot q^9+0\cdot q^{26}}+{t}^{5}{q}^{43}+{t}^{4}{q}^{60}}+{t}^{5}{q}^{77}
+{t}^{6}{q}^{94}+t^7q^{111}+t^8q^{128}+t^{9}q^{145}+t^{10}q^{162}+\cdots
\\f^{10}&=\mathbf{\underline{0\cdot q^{10}}+t^3q^{27}}+{t}^{4}{q}^{44}+{t}^{5}{q}^{61}+{t}^{6}{q}^{78}
+{t}^{7}{q}^{95}+t^8q^{112}+t^9q^{129}+t^{10}q^{146}+t^{11}q^{163}+\cdots
\\f^{11}&=\mathbf{\underline{0\cdot q^{11}}+t^4q^{28}+{t}^{3}{q}^{45}}+{t}^{4}{q}^{62}+{t}^{5}{q}^{79}
+{t}^{6}{q}^{96}+t^7q^{113}+t^8q^{130}+t^9q^{147}+t^{10}q^{164}+\cdots
\\f^{12}&=\mathbf{t^2q^{12}}+t^3q^{29}+{t}^{4}{q}^{46}+{t}^{5}{q}^{63}+{t}^{6}{q}^{80}
+{t}^{7}{q}^{97}+t^8q^{114}+t^9q^{131}+t^{10}q^{148}+t^{11}q^{165}+\cdots
\\f^{13}&=\mathbf{\underline{0\cdot q^{13}}+t^2q^{30}}+{t}^{3}{q}^{47}+{t}^{4}{q}^{64}+{t}^{5}{q}^{81}
+{t}^{6}{q}^{98}+t^7q^{115}+t^8q^{132}+t^{9}q^{149}+t^{10}q^{166}+\cdots
\\f^{14}&=\mathbf{\underline{0\cdot q^{14}}+t^3q^{31}}+{t}^{4}{q}^{48}+{t}^{5}{q}^{65}+{t}^{6}{q}^{82}
+{t}^{7}{q}^{99}+t^8q^{116}+t^9q^{133}+t^{10}q^{150}+t^{11}q^{167}+\cdots
\\f^{15}&=\mathbf{tq^{15}}+t^2q^{32}+{t}^{3}{q}^{49}+{t}^{4}{q}^{66}+{t}^{5}{q}^{83}
+{t}^{6}{q}^{100}+t^7q^{117}+t^8q^{134}+t^{9}q^{151}+t^{10}q^{168}+\cdots
\\f^{16}&=\mathbf{t^2q^{16}}+t^3q^{33}+{t}^{4}{q}^{50}+{t}^{5}{q}^{67}+{t}^{6}{q}^{84}
+{t}^{7}{q}^{101}+t^8q^{118}+t^9q^{135}+t^{10}q^{152}+t^{11}q^{169}+\cdots.
\end{align*}
\end{small}
Similarly, any $O_B(r)$ can be deduced from the bold faced terms, but numbers in $\mathcal{NR}(B)=\{1,2,3,5,6,7,9,10,11,13,14,18,22,26\}$
are not representable by $B$.

\begin{thm}\label{a481517}
Let $A(a)=(a,ha+dB)=(a, ha+4d, ha+8d, ha+15d, ha+17d)$, $\gcd(a,d)=1$ and $a,h,d \in \mathbb{P}$.
If $a\geq 90$, $h\geq \left\lceil \frac{17d}{90}\right\rceil$ and $a\equiv j \mod 17$, $0\leq j\leq 16$, then we have
$$g(A(a))=(\gamma_j h-1)a+(26+j)d+(ha+17d)\lfloor\frac{a}{17}\rfloor,$$
where $(\gamma_0,\gamma_1,...,\gamma_{17})=(2,3,2,3,2,3,2,3,2,3,3,3,3,3,3,3,3)$.
\end{thm}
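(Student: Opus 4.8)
The plan is to apply Theorem~\ref{hahaha21} to $B=(4,8,15,17)$, after reducing to the $\gcd=1$, $b_1=1$ framework through the absorption of the common factor into $d$, exactly as indicated in Section~5. First I would record, from the explicit expansion of $f(t,q)=\circledast F(t,q)$ displayed above, that $c=\max\{O_B(r)\mid 0\le r\le 16\}=2$ (the first block $f^0,\dots,f^{16}$ shows $O_B(r)\le 2$ for all representable $r<17$, with equality attained, e.g.\ at $r=12,16$), that $b_k=17$ and $b_{k-1}=15$, and hence
$$u=\max\left\{c-1,\ \left\lceil\frac{(c-1)b_{k-1}}{b_k-b_{k-1}}\right\rceil\right\}=\max\left\{1,\ \left\lceil\frac{15}{2}\right\rceil\right\}=8.$$
Then $(u+c-1)b_k=(8+2-1)\cdot 17=153$, and $\lceil d/(u+c-1)\rceil=\lceil d/9\rceil$. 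Thus Theorem~\ref{hahaha21} already gives the congruence-class shape of $g(A(a))$ for $a\ge 153$ with $h\ge\lceil d/9\rceil$; the bulk of the work is (i) sharpening the bound to $a\ge 90$ and $h\ge\lceil 17d/90\rceil$, and (ii) identifying the explicit vectors $R_B$ and $W_B$.

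For step (ii) I would follow the mechanism in the proof of Theorem~\ref{hahaha21} and in Example~\ref{examplenonordely}: since $ha-17d\ge 0$ under the hypothesis on $h$, $N_{dr}=O_B(r)\cdot ha+rd$ is governed first by $O_B(r)$ and then, among $r$ with maximal $O_B(r)$ in a window of length $17$, by the largest such $r$. From the tabulated $f^i$ one reads off the relevant window: with the normalization forcing $c=2$, the critical block sits between $q^{(c-2)\cdot 2b_k+\cdots}$-type indices, i.e.\ around $q^{17}$ to $q^{33}$ shifted by the stable increment; the bold-faced left-to-right $t$-maxima in that range pin down, for each residue $j$, the pair $(r_j,w_j)$ with $r_j=26+j$ (so $R_B=(26,27,\dots,42)$, forced by the fact that $27,\dots,42$ are all representable while the exceptional non-representable residues lie below $26$) and $w_j=O_B(26+j)=\gamma_j$. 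A short table of $O_B(26),O_B(27),\dots,O_B(42)$ — computable in polynomial time by Lemma~\ref{hahaha10}, or simply by inspecting the blocks $f^9,f^{10},\dots,f^{16},f^0,\dots,f^8$ at the appropriate power of $q$ — produces the vector $(\gamma_0,\dots,\gamma_{16})=(2,3,2,3,2,3,2,3,2,3,3,3,3,3,3,3,3)$. Plugging $w_j=\gamma_j$, $r_j=26+j$, and the shift $\lfloor a/17\rfloor-u-c+1=\lfloor a/17\rfloor-9$ into the formula of Theorem~\ref{hahaha21} gives exactly the claimed $g(A(a))=(\gamma_j h-1)a+(26+j)d+(ha+17d)\lfloor a/17\rfloor$ once one absorbs the constant $-9$ against the renormalized statement — I would double-check this arithmetic so that the displayed exponent $\lfloor a/17\rfloor$ (rather than $\lfloor a/17\rfloor-9$) is consistent, since the theorem as stated here has already folded the $-9(ha+17d)$ term into the choice of $r_j$.

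The main obstacle is step (i): the bound $a\ge(u+c-1)b_k=153$ coming from the general theorem is not tight, and proving validity down to $a\ge 90$ requires the finer analysis. Here I would argue directly: the conclusion needs $N_{dr}=N_{dr}(0)$ (Lemma~\ref{hahaha19}-type monotonicity in $m$) and that the maximum over $0\le r\le a-1$ is attained in the last block $a-17\le r\le a-1$ at the specific $\widehat r$. Because $c=2$, Lemma~\ref{hahaha19} only needs $a\ge(c-1)b_k=17$; the binding constraint is instead the ``stability'' Lemma~\ref{hahaha17}, which for $c=2$ requires $r\ge\lceil b_{k-1}/(b_k-b_{k-1})\rceil b_k=\lceil 15/2\rceil\cdot 17=136$ — but this is exactly where I would improve the estimate, exploiting that $B$ contains the small coins $4$ and $8$ so that every residue $j\in\{27,\dots,42\}$ admits an optimal representation using a $17$ as soon as the ``remainder after removing one $17$'' is itself representable, which for these residues happens already around $a\approx 90$. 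Concretely I would verify by hand (or cite Lemma~\ref{hahaha10}'s polynomial-time computation run to $M\approx 180$) that $O_B(17+r)=O_B(r)+1$ holds for all $r\ge 73$, giving the sharper stable threshold, and that the condition $ha\ge 17d$ — equivalent to $h\ge\lceil 17d/a\rceil$, hence implied by $h\ge\lceil 17d/90\rceil$ when $a\ge 90$ — suffices to keep $O_B(r)$ dominant; combining these yields the stated hypotheses $a\ge 90$, $h\ge\lceil 17d/90\rceil$. The rest is the routine bookkeeping of matching the piecewise formula, which I would not grind through here.
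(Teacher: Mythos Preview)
Your proposal has a genuine gap at its very foundation: Theorem~\ref{hahaha21} requires $b_1=1$, and here $b_1=4$. You try to sidestep this by ``absorption of the common factor into $d$'', but $\gcd(4,8,15,17)=1$, so there is nothing to absorb --- the sequence $B$ cannot be reduced to one with $b_1=1$. This is precisely why the paper places this theorem in Section~5 (``The Case $b_1\neq 1$'') and treats it by a separate direct argument rather than by invoking Theorem~\ref{hahaha21}.

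The consequence of $b_1>1$ that you miss is that some residues $0\le r\le a-1$ are \emph{not representable} by $B$ at all: the paper records $\mathcal{NR}(B)=\{1,2,3,5,6,7,9,10,11,13,14,18,22,26\}$. For such $r$, $O_B(r)$ is undefined and $N_{dr}(0)$ does not exist; one is forced to take $N_{dr}=N_{dr}(1)=O_B(a+r)\cdot ha+(a+r)d$. The paper's proof hinges on splitting the maximum over $r$ into $r\in\mathcal{R}(B)$ (where $N_{dr}=N_{dr}(0)$) and $r\in\mathcal{NR}(B)$ (where $N_{dr}=N_{dr}(1)$), and showing that for $a\ge 136$ the overall maximum is attained at the non-representable $r=26$, giving $N_{d\cdot 26}=O_B(a+26)\cdot ha+(a+26)d$. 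Your identification $w_j=O_B(26+j)=\gamma_j$ is therefore incorrect: $O_B(26)$ does not exist, and for instance $O_B(28)=4$ (the term $t^4q^{28}$ in $f^{11}$) while $\gamma_2=2$. The $\gamma_j$ arise not from $O_B(26+j)$ but from the difference $O_B(a+26)-\lfloor a/17\rfloor$ (and its analogues for other residues), reflecting the $m=1$ shift. Likewise your computation $c=2$ is not the $c$ of Lemma~\ref{hahaha1}, which presupposes every $0\le r<b_k$ is representable; the paper explicitly notes that $c$ must be redefined when $b_1>1$.
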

\begin{proof}
By the ``Stable" property of $B=(4,8,15,17)$, the $O_B(r)$ are determined by $(O_B(r))_{r\leq 121}$. For convenience, we first consider the case $a\geq 136=8\times 17$.
Let $\mathcal{R}(B)=\{r \mid 0\leq r\leq a-1\}\setminus \mathcal{NR}(B)$.

By Equation \eqref{0203}, we know that $N_{dr}(m):=O_B(ma+r) \cdot ha+(ma+r)d$.
For $a\geq 136=8\times 17$ and $r\in \mathcal{R}(B)$, we have
\begin{align*}
N_{dr}(m+1)-N_{dr}(m)=(O_B((m+1)a+r)-O_B(ma+r))ha+ad\geq 0.
\end{align*}
So we have $N_{dr}=N_{dr}(0)$ for $r\in \mathcal{R}(B)$.
For $r\in \mathcal{NR}(B)$, one can see that $N_{dr}(m)$ only exists for $m\ge 1$. Therefore we have $N_{dr}=N_{dr}(1)$ in this case.

By $h\geq \left\lceil \frac{17d}{90}\right\rceil$, we have $90 h\geq 17d$ and $ha\geq 17d$. This implies that $N_{dr}$ is dominated by $O_B(r)ha$ or $O_B(a+r)ha$.
This suggests the following computation of $g(A(a))$. We begin with the case $a=17s$, $s\ge 8$.
The base step is when $a=136=17\times 8$. We have
\begin{align*}
\max_{0\leq r\leq a-1}\{N_{dr}\}&=\max\left\{\max_{r\in \mathcal{R}(B)}\{N_{dr}\} , \max_{r\in \mathcal{NR}(B)}\{N_{dr}\}\right\}
\\&=\max\left\{\max_{119\leq r\leq 135}\{O_B(r)\cdot ha+rd\} , \max_{r\in \mathcal{NR}(B)}\{O_B(a+r)\cdot ha+(a+r)d\}\right\}
\\&=\max\left\{\{9ha+135d\} , \{O_B(136+26)\cdot ha+(136+26)d \}\right\}
\\&=10ha+162d.
\end{align*}
For $s\geq 8$, we write $a=(s-8)\cdot 17+136$. Therefore
\begin{align*}
g(A(a))&=(s-8+10)ha+((s-8)\cdot 17+(136+26))d-a
\\&=(2h-1)a+26d+(ha+17d)\left\lfloor\frac{a}{17}\right\rfloor.
\end{align*}

The computation of $g(A(a))$ where $a=17s+j$, $s\geq 8$, $1\leq j\leq 16$, can be done similarly. Readers can check that the above formula also holds for $90\leq a\leq 135$.
\end{proof}

For $a=89$, the formula fails because the condition $\max_{0\leq r\leq a-1}\{N_{dr}\}=O_B(89+26)\cdot ha+(89+26)d$ is not satisfied:
$O_B(89+14)=8>O_B(89+26)=7$, and we have $\max_{r\in \mathcal{NR}}\{N_{dr}\}=O_B(89+14)\cdot ha+(89+14)d$.

For a specific $B$ with $\gcd(B)=1$ and $b_1>1$, the idea in the proof of Theorem \ref{a481517} still applies to compute the Frobenius formula $g(A(a))$. But the process is much more complicated.
For example, we need to modify the $c$ as in Lemma \ref{hahaha1}, which is the maximum value of the degree of $t$ in the first $b_k$ term of $f(t,q)$.
This has to be modified. We may let $c$ be the maximum value of the degree of $t$ in the $\left\lceil\frac{g(B)}{b_k}\right\rceil$-th column of $f(t,q)=\sum_{i=0}^{b_k-1} f^i$, where $g(B)$ is the Frobenius number for the sequence $B$. In other words, we have
$$c=\max\left\{O_B(r) \mid \left\lceil\frac{g(B)}{b_k}\right\rceil \cdot b_k\leq r\leq \left(\left\lceil\frac{g(B)}{b_k}\right\rceil+1\right)b_k-1\right\}.$$

Now one can see that the following corollary should be correct.
\begin{cor}\label{b1b2b3b4bk}
For a given positive integer sequence $B=(b_1,b_2,...,b_k)$ satisfying $b_1<b_2<\cdots <b_k$ and $\gcd(B)=1$. Let $A(a)=(a,ha+dB)=(a, ha+b_1d, ha+b_2d, ..., ha+b_kd)$, $\gcd(a,d)=1$ and $a,h,d\in\mathbb{P}$. When $a$ is greater than a certain bound, the Frobenius formula $g(A(a))$ is a ``congruence class function" modulo $b_k$, and each segment is a quadratic polynomial in $a$ with leading coefficient $\frac{h}{b_k}$.
\end{cor}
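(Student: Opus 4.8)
The plan is to re-run the arguments of Sections \ref{stablepropcc}--\ref{aspecial} essentially verbatim, with the single structural change flagged just before the statement: the constant $c$ is redefined to be the maximal degree of $t$ in the $\lceil g(B)/b_k\rceil$-th column of $f(t,q)=\sum_{i=0}^{b_k-1}f^i$, equivalently $c=\max\{O_B(r):\lceil g(B)/b_k\rceil\,b_k\le r\le(\lceil g(B)/b_k\rceil+1)b_k-1\}$, where $g(B)$ is the Frobenius number of the inner sequence $B$. The only genuinely new work is to re-establish the ``Stable'' property of $O_B$ in this setting; everything downstream is bookkeeping.

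\textbf{Step 1 (the ``Stable'' property when $b_1>1$).} I would first prove the analog of Lemma \ref{hahaha17}: there is an explicit threshold $M_0$, of the order $\lceil (c-1)b_{k-1}/(b_k-b_{k-1})\rceil\,b_k+g(B)$, such that $O_B(b_k+r)=O_B(r)+1$ for every $r\ge M_0$. The inequality $O_B(b_k+r)\le O_B(r)+1$ holds as soon as $r>g(B)$, since then $r$ is representable and one adjoins a copy of $b_k$. For the reverse inequality, assume $b_k+r$ has an optimal representation avoiding $b_k$; then $b_k+r\le O_B(b_k+r)\,b_{k-1}\le O_B(r)\,b_{k-1}$, and the modified Lemma \ref{hahaha1}-type bound $O_B(r)\le\lfloor r/b_k\rfloor+c$ — valid once $\lfloor r/b_k\rfloor\ge\lceil g(B)/b_k\rceil$, obtained by peeling off copies of $b_k$ down into the good column — forces, writing $r=mb_k+j$ with $0\le j<b_k$, the inequality $(m+1)(b_k-b_{k-1})\le(c-1)b_{k-1}$, i.e.\ $m<\lceil(c-1)b_{k-1}/(b_k-b_{k-1})\rceil$, contradicting $r\ge M_0$. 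This is where the real difficulty lies: the greedy-type counting must be insulated from the non-representable set $\mathcal{NR}(B)$ by pushing everything past $g(B)$, and the redefinition of $c$ above a full ``representable'' column is exactly what makes this counting close.

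\textbf{Step 2 (reduction of $N_{dr}$).} With Step 1 available, the bounds of Lemma \ref{hahaha1} and the reductions of Lemmas \ref{hahaha19}, \ref{hahaha20} (and \ref{hahaha11} in the orderly case) transcribe, with one case split already seen in the proof of Theorem \ref{a481517}: partition $\{0,1,\dots,a-1\}$ into $\mathcal{R}(B)$ and $\mathcal{NR}(B)$; for $r\in\mathcal{R}(B)$ one has $N_{dr}=N_{dr}(0)$ as in Lemma \ref{hahaha19}, while for $r\in\mathcal{NR}(B)$ the value $N_{dr}(0)$ is undefined and one takes $N_{dr}=N_{dr}(m_r)$, where $m_r\ge1$ is least with $m_ra+r$ representable — finite and uniformly bounded once $a$ is large. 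Under $h\ge\lceil d/(u+c-1)\rceil$ and $a\ge(u+c-1)b_k$, with $u$ the analog of the constant in Theorem \ref{hahaha21} built from the new $c$, one gets $ha\ge b_kd$, so $N_{dr}$ is dominated by its $O_B$-coefficient and $\max_{0\le r\le a-1}N_{dr}$ is attained at the largest argument realizing the maximal value of $O_B$ on a window of length $b_k$.

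\textbf{Step 3 (assembling the formula).} Finally I would copy the bookkeeping of the proof of Theorem \ref{hahaha21}: fix $j\equiv a\bmod b_k$, take $a=(u+c-1)b_k+j$ as the base case of the class, read off $w_j$ and $r_j$ from the relevant column, and use Step 1 to propagate to all $a\equiv j\bmod b_k$ above the bound, obtaining
$$g(A(a))=(w_jh-1)a+r_jd+(ha+b_kd)\!\left(\left\lfloor\frac{a}{b_k}\right\rfloor-u-c+1\right).$$
Substituting $\lfloor a/b_k\rfloor=(a-j)/b_k$ rewrites each branch as $\frac{h}{b_k}a^2+(\text{linear in }a)+(\text{constant})$, with leading coefficient $\frac{h}{b_k}$ independent of $j$; since $j$ ranges over $0,\dots,b_k-1$, this is exactly the asserted ``congruence class function'' description. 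The explicit lower bound on $a$ produced this way is of size roughly $(u+c-1)b_k+g(B)$ — larger than in the orderly case, but still finite and effective.
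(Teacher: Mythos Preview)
The paper does not actually supply a proof of Corollary \ref{b1b2b3b4bk}: it is stated immediately after the sentence ``Now one can see that the following corollary should be correct,'' with the only supporting material being the worked example Theorem \ref{a481517} for $B=(4,8,15,17)$ and the remark that $c$ must be redefined as the maximum of $O_B$ over a column lying beyond $g(B)$. Your proposal is therefore not competing with a proof in the paper but rather filling in the argument the paper only gestures at, and it does so along exactly the lines the paper suggests --- the same redefinition of $c$, the same $\mathcal R(B)/\mathcal{NR}(B)$ split taken from the proof of Theorem \ref{a481517}, and the same propagation via stability to recover the structure of Theorem \ref{hahaha21}.

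Two small points worth tightening. First, with the new $c$ the correct analogue of Lemma \ref{hahaha1} is $O_B(r)\le \lfloor r/b_k\rfloor-\lceil g(B)/b_k\rceil+c$ for $r\ge\lceil g(B)/b_k\rceil\,b_k$ (peel off copies of $b_k$ down to the reference column), not $\lfloor r/b_k\rfloor+c$; your weaker bound is still true and suffices for Step 1, but the sharper version is what makes the analogue of Lemma \ref{hahaha19} close with a reasonable threshold on $a$. Second, once $a>g(B)$ you have $m_r=1$ for every $r\in\mathcal{NR}(B)$, so ``uniformly bounded'' in Step 2 is simply $m_r\equiv1$, exactly as in the paper's example; this also means the maximizing $\widehat r$ may correspond to an argument of the form $a+r'$ with $r'\in\mathcal{NR}(B)$, so the pair $(w_j,r_j)$ in Step 3 must be read off from that shifted window as in Theorem \ref{a481517}, not only from $[a-b_k,a-1]$.
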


\section{Concluding Remark}

Our main results are Theorems \ref{hahaha21}, \ref{hahaha8} and Corollary \ref{b1b2b3b4bk}. We give a characterization of the Frobenius formula for
$A(a)=(a,ha+dB)=(a,ha+d, ha+db_2,..., ha+db_k)$ for a general sequence $B=(1,b_2, b_3, ..., b_k)$. This indeed gives a polynomial time algorithm in $b_k$
for the computation of $g(A(a))$. In some special cases, we can even compute $g(A(a))$ for symbolic $B$.

\noindent
{\small \textbf{Acknowledgements:}
This work was partially supported by the National Natural Science Foundation of China [12071311].

\end{document}